\newtheorem{thm}{Theorem}[section]
\newtheorem{lem}[thm]{Lemma}
\newtheorem{prop}[thm]{Proposition}
\theoremstyle{definition}
\newtheorem{defn}[thm]{Definition}
\newtheorem{notn}[thm]{Notation}
\theoremstyle{remark}
\newtheorem{rem}[thm]{Remark}
\newtheorem{claim}{Claim}
\numberwithin{equation}{section}
\newcommand{\I}{\mathcal{I}}
\newcommand{\Text}[1]{\text{\textnormal{#1}}}
\newcommand{\cl}{\textrm{cl}}
\begin{document}

\title{on circuits and serial symmetric basis-exchange in matroids}%
\author{Daniel Kotlar}%
\address{Computer Science Department, Tel-Hai College, Upper Galilee 12210, Israel}%
\email{dannykot@telhai.ac.il}%

\thanks{The author thanks an anonymous referee for many insightful comments and for offering shorter and more transparent proofs, than the ones originally provided by the author, for Proposition~\ref{prop6:1} and Theorem 5.3.}%

\begin{abstract}
The way circuits, relative to a basis, are affected as a result of exchanging a basis element, is studied. As consequences, it is shown that three consecutive symmetric exchanges exist for any two bases of a matroid, and that a full serial symmetric exchange, of length at most 6, exists for any two bases of a matroid of rank 5. A new characterization of binary matroids, related to basis-exchange, is presented.
\end{abstract}
\maketitle
\section{Introduction}
A matroid $M$ consists of a hereditary family $\I$ of subsets (called \emph{independent sets}) of a finite ground set $E$ that satisfies an augmentation axiom: If $A,B \in \I$ and $|B|>|A|$ then there exists $x\in B\setminus A$ such that $A+x\in \I$ (the common notation of $A+x$ for $A\cup\{x\}$ and $A-x$ for $A\setminus\{x\}$ is used here). A maximal independent set is called a \emph{basis}. An element $x\in E$ is \emph{spanned} by $A$ if either $x\in A$ or $I+x\not\in \I$ for some independent set $I\subseteq A$. The \emph{rank} of $A\subseteq E$, denoted $r(A)$, is the size of a maximal independent subset in $A$. A \emph{circuit} is a minimal dependent set. When $I$ is independent but $I+x$ is not, the unique circuit in $I+x$ will be denoted by $C(x,I)$. For a matroid $M$, its dual matroid $M^*$ is the matroid on the same ground set whose bases are the complements of the bases of $M$. The circuits of $M^*$ are called \emph{cocircuits} of $M$. For further information about matroid theory the reader is referred to Oxley~\cite{Oxley11} and Welsh~\cite{Welsh76}.

The notion of \emph{basis-exchange} refers to the replacement of one element (or a set of elements) in a basis by an element (resp. a set of elements) outside the basis so that the resulting new set is also a basis, or to the exchange of one or more elements between two bases so that the resulting two new sets are also bases (\emph{symmetric basis-exchange}). For some basic results regarding basis-exchange the reader is referred to Greene~\cite{Greene71,Greene73} and Kung~\cite{Kung86}. Greene~\cite{Greene71} used basis-exchange properties to characterize binary matroids (see also \cite{Bonin98}).

A \emph{serial basis-exchange} is a basis-exchange that is performed one element at a time so that after each exchange the resulting set is a basis. When symmetrically exchanging one element at a time between two bases the process is termed \emph{serial symmetric basis-exchange}. A \emph{full serial symmetric basis-exchange} is a serial symmetric basis-exchange that ends when all the elements were exchanged. An interesting, still unsolved, problem, first presented by Gabow~\cite{Gabow76}, is the following:
\\
\\
\textbf{Basis-Exchange Problem:} Does a full serial symmetric basis-exchange exist for any two bases of a matroid?
\\

This problem was presented in a more general form by Kajitani and Sugishita~\cite{Kaj83} and further possible generalizations appear in the work of van den Heuvel and Thomass\'{e}~\cite{HT}. It was proved for graphic block matroids by Farber et al. \cite{FRS85} (with a modification by Weidemann \cite{Wei}), and independently by  Kajitani et al. \cite{Kaj88} and Cordovil and Moreira \cite{Cor93}. It was also proved for transversal block matroids by Farber \cite{Far89}. Recently Bonin~\cite{Bonin10} proved it for sparse paving matroids and Kotlar and Ziv~\cite{KZ2011} proved it for any matroid of rank 4, while showing that any pair of elements in one basis can be symmetrically and serially exchanged with some pair of elements in the other basis.

In the works mentioned above it is assumed that in a full serial symmetric basis-exchange each element is exchanged exactly once so that the process involves $r(M)$ steps. However, referring to the \emph{length} of a serial symmetric basis-exchange as the number of one-element exchanges performed, we can ask a weaker question, namely, \emph{whether a full serial symmetric exchange of any length exists for any two bases}, or \emph{what is the minimal length of a full serial symmetric basis-exchange that we can prove to exist}? If we refer to a serial symmetric exchange that is not full as \emph{partial serial symmetric exchange}, another question, not considered so far, is \emph{what is the minimal length of a partial serial symmetric exchange that we can ensure to exist between any two bases?}
\\

Before concluding this introduction we describe some more terminology and notation:
\\
Let $A$ and $B$ be two disjoint bases of a matroid $M$, and let $a\in A$ and $b\in B$. We say that $a$ and $b$ are \emph{exchangeable relative to} $A$ \emph{and} $B$ if $A-a+b$ and $B-b+a$ are bases. When $a$ and $b$ are exchangeable relative to two known bases, the exchange of $a$ and $b$ will be denoted by $a\leftrightarrow b$. The notation $Sym(a,A,B)$ will be used to describe the set of elements in $B$ that are exchangeable with $a$, relative to $A$ and $B$. It will be convenient to use the directed bipartite graph whose parts are $A$ and $B$, and an edge from $a\in A$ to $b\in B$ indicates that $B-b+a$ is a basis. Thus, a two-sided edge between $a\in A$ and $b\in B$ indicates that $a$ and $b$ are exchangeable relative to $A$ and $B$. When the bases involved are known we shall sometimes refer to the relation $a\rightarrow b$ to indicate that in the corresponding bipartite graph there is an edge from $a$ to $b$ (and similarly for $a\leftarrow b$).
\\

Section~\ref{sec2} describes how the circuits $C(a,B)$ evolve as a result of exchanging one element in the basis $B$ and how the sets $Sym(a,A,B)$ evolve as a result of symmetrically exchanging one element between the bases $A$ and $B$. These results are used in Section~\ref{sec3} to show that a partial serial symmetric basis-exchange of length 3 exists for any two bases of a matroid. In section~\ref{sec4} it is shown that a full serial symmetric exchange of length at most 6 exists for any two bases of a matroid of rank 5. Section~\ref{sec5} contains related results for binary matroids, including a basis-exchange characterization of binary matroids.
\section{The effect of exchanging one basis element}\label{sec2}
For any two sets $X$ and $Y$ let $X\bigtriangleup Y$ be the symmetric difference $(X\setminus Y)\cup (Y\setminus X)$. The proof of the next proposition uses the following axiom (see \cite{Oxley11}).
\\
\\
\textbf{Strong Circuit Elimination Axiom:} Let $C_{1}$ and $C_{2}$ be circuits and let $x\in C_{1}\cap C_{2}$ and $y\in C_{1}\bigtriangleup C_{2}$. Then there exists a circuit $C\subset C_{1}\cup C_{2}$ such that $x\not\in C$ and $y\in C$.
\\
\begin{notn}\label{notn1}
Let $B$ be a basis in a matroid $M$ and let $x\not\in B$. For $b\in C(x,B)\cap B$ let $B_{b,x}$ denote the basis $B-b+x$.
\end{notn}
Now, let $y\not\in B$, $y\neq x$. If $b\not\in C(y,B)$ then $C(y,B_{b,x})=C(y,B)$. If $b\in C(y,B)$, the following proposition provides information on the circuit $C(y,B_{b,x})$.
\begin{prop}\label{prop2:1}
Let $B$ be a basis in a matroid $M$. Let $x,y\not\in B$ ($x\ne y$), and $b\in C(x,B)\cap C(y,B)$, then
\begin{equation}\label{eq1}
C(x,B)\bigtriangleup C(y,B)\subseteq C(y,B_{b,x})\subseteq C(x,B)\cup C(y,B).
\end{equation}
\end{prop}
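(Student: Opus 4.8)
The plan is to derive both inclusions from one device: the Strong Circuit Elimination Axiom applied to the pair $C(x,B), C(y,B)$ with distinguished common element $b$, combined with the uniqueness of the fundamental circuit in $B_{b,x}+y$. The payoff is that no case analysis on individual elements is needed — every relevant circuit is forced to coincide with $C(y,B_{b,x})$.

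First I would record a few structural facts. Since $C(x,B)\subseteq B+x$ and $C(y,B)\subseteq B+y$ with $x\neq y$ and $x,y\notin B$, their common element $b$ in fact lies in $B$, so $B_{b,x}=B-b+x$ is a basis in the sense of Notation~\ref{notn1}; and because $y\neq x$ and $y\notin B$ we have $y\notin B_{b,x}$, so $C(y,B_{b,x})$ is well defined as the unique circuit of $B_{b,x}+y$. Moreover $C(x,B)\cup C(y,B)\subseteq B+x+y$, so any circuit contained in $C(x,B)\cup C(y,B)$ that omits $b$ is contained in $(B+x+y)-b=B_{b,x}+y$. Finally, since $B_{b,x}$ is independent, every circuit contained in $B_{b,x}+y$ must contain $y$ and, being a circuit of $B_{b,x}+y$, must equal $C(y,B_{b,x})$.

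With these facts in hand, both inclusions follow by applying the axiom with $C_{1}=C(x,B)$, $C_{2}=C(y,B)$ and eliminated element $b\in C_{1}\cap C_{2}$. For the upper bound I would take the kept element to be $y$, which lies in $C(y,B)\setminus C(x,B)\subseteq C_{1}\bigtriangleup C_{2}$; the axiom yields a circuit $C\subseteq C(x,B)\cup C(y,B)$ with $b\notin C$ and $y\in C$, and by the structural facts $C=C(y,B_{b,x})$, giving $C(y,B_{b,x})\subseteq C(x,B)\cup C(y,B)$. For the lower bound I would run the same machine pointwise: given any $z\in C(x,B)\bigtriangleup C(y,B)$, apply the axiom with kept element $z$ to obtain a circuit $C_{z}\subseteq C(x,B)\cup C(y,B)$ with $b\notin C_{z}$ and $z\in C_{z}$; since $C_{z}$ omits $b$ it lies in $B_{b,x}+y$, so again $C_{z}=C(y,B_{b,x})$, whence $z\in C(y,B_{b,x})$. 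As $z$ was arbitrary, $C(x,B)\bigtriangleup C(y,B)\subseteq C(y,B_{b,x})$.

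The step I expect to be the crux is the identification argument — the reason the circuit produced by elimination is forced to be exactly $C(y,B_{b,x})$, rather than merely some circuit inside $C(x,B)\cup C(y,B)$. This hinges on two things working in tandem: that discarding $b$ confines the circuit to $B_{b,x}+y$, and that the only circuit living there is the fundamental one, which necessarily contains $y$. Once that uniqueness is secured, both inclusions are immediate; in particular one never has to argue separately about the location of special elements such as $x$.
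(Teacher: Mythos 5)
Your proof is correct and follows essentially the same route as the paper's: both apply the Strong Circuit Elimination Axiom to $C(x,B)$ and $C(y,B)$ with eliminated element $b$, and identify the resulting circuit with $C(y,B_{b,x})$ because, omitting $b$, it lies inside $B_{b,x}+y$, whose unique circuit is the fundamental one. Your identification step is marginally cleaner than the paper's (which rules out $C=C(x,B)$ and $C=C(y,B)$ by a small case analysis), and by letting the kept element range over all of $C(x,B)\bigtriangleup C(y,B)$ rather than only its elements lying in $B$, you treat $x$ and $y$ uniformly where the paper leaves them implicit.
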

\begin{proof}
Suppose $b'\in B$ and $b'\in C(y,B)\setminus C(x,B)$. By the Strong Circuit Elimination Axiom $C(x,B)\cup C(y,B)$ contains a circuit $C$ that contains $b'$ but not $b$. The circuit $C$ must contain at least one of $x$ and $y$ (otherwise $C\subset B$, which is ruled out since $B$ is independent). If $C$ contains just $x$ (or just $y$) then $C=C(x,B)$ (or $C=C(y,B)$), contrary to the assumption that $b\in C(x,B)$ (respectively, $b\in C(y,B)$). Thus $x,y\in C$ and hence $C=C(y, B_{b,x})$. It follows that $b'\in C(y, B_{b,x})$ and $C(y, B_{b,x})\subseteq C(x,B)\cup C(y,B)$. Thus $C(y,B)\setminus C(x,B)\subseteq C(y, B_{b,x})$. The proof that $C(x,B)\setminus C(y,B)\subseteq C(y, B_{b,x})$ is similar.
\end{proof}

A matroid $M$ whose ground set is the disjoint union of two bases is called a \emph{block matroid}. Let $A$ and $B$ be two disjoint bases in a block matroid $M$ so that $E=A\cup B$. Since $A=E\setminus B$, both $A$ and $B$ are cobases. For $a\in A$ let $C^*(a,B)$ be the unique cocircuit in $B+a$, where $B$ is viewed as a cobase.

For two disjoint bases $A$ and $B$ and any two distinct elements $a,a'\in A$, the following set was defined in \cite{KZ2011}:
\begin{equation*}
Conn(a,a',A,B)=\{b\in B|A-a'+b\;\Text{and}\;B-b+a\;\Text{are bases}\}
\end{equation*}
The rationale for this notation comes from the corresponding directed bipartite graph, in which $Conn_{B}(a,a')$ is the set of elements of $B$ that ``connect'' $a$ to $a'$ through a directed path of length two. Note that
\begin{equation}\label{eq1:1}
Sym(a,A,B)\cap Sym(a',A,B)=Conn(a,a',A,B)\cap Conn(a',a,A,B).
\end{equation}
By restricting to a block matroid and using the known fact that in a block matroid $C^*(a,B)=\{b\in B | a\in C(b,A)\}+a$ we obtain:
\begin{prop}\label{prop3:1}
If $A$ and $B$ are disjoint bases in a block matroid $M$ with $a,a'\in A$ and $a\neq a'$, then
\begin{enumerate}
\item[\Text{(i)}] $Sym(a,A,B)= \left(C(a,B)\cap C^*(a,B)\right)-a$, and
\item[\Text{(ii)}] $Conn(a,a',A,B)= C(a,B)\cap C^*(a',B)$.
\end{enumerate}
\end{prop}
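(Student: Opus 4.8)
The plan is to translate every basis condition appearing in the definitions of $Sym(a,A,B)$ and $Conn(a,a',A,B)$ into a membership statement about fundamental circuits, and then read off the result using the cocircuit description supplied just before the proposition. The two elementary facts I would invoke are the standard fundamental-circuit exchange properties, valid because $A$ and $B$ are both bases of the block matroid $M$: for $b\in B$ one has that $B-b+a$ is a basis if and only if $b\in C(a,B)$, and for $b\in B$ (so that $b\notin A$) one has that $A-a'+b$ is a basis if and only if $a'\in C(b,A)$, and likewise with $a$ in place of $a'$. The bridge to cocircuits is the displayed formula $C^*(a',B)=\{b\in B\mid a'\in C(b,A)\}+a'$, which says precisely that, for $b\in B$, the condition $a'\in C(b,A)$ is equivalent to $b\in C^*(a',B)$.

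For part (ii) I would simply intersect the two defining conditions. By the facts above, $b\in Conn(a,a',A,B)$ holds iff $B-b+a$ and $A-a'+b$ are bases, iff $b\in C(a,B)$ and $a'\in C(b,A)$, iff $b\in C(a,B)\cap C^*(a',B)$ with $b\in B$. This gives $Conn(a,a',A,B)=\bigl(C(a,B)\cap B\bigr)\cap\bigl(C^*(a',B)\cap B\bigr)$, and the only remaining bookkeeping is to check that the two spurious ``outside'' elements vanish: since $C(a,B)\subseteq B+a$ and $C^*(a',B)\subseteq B+a'$ with $a\ne a'$, the element $a$ lies in $C(a,B)$ but not in $C^*(a',B)$, and $a'$ lies in $C^*(a',B)$ but not in $C(a,B)$. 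Hence $C(a,B)\cap C^*(a',B)$ already sits inside $B$ and equals the intersection above, yielding $Conn(a,a',A,B)=C(a,B)\cap C^*(a',B)$.

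For part (i) the argument is the same with $a'$ replaced by $a$: here $b\in Sym(a,A,B)$ iff $B-b+a$ and $A-a+b$ are bases, iff $b\in C(a,B)$ and $a\in C(b,A)$, iff $b\in C(a,B)\cap C^*(a,B)$ with $b\in B$. The single difference in the bookkeeping is that now the common element $a$ lies in both $C(a,B)$ and $C^*(a,B)$, so $C(a,B)\cap C^*(a,B)$ contains $a$; intersecting with $B$ therefore amounts to deleting $a$, which gives $Sym(a,A,B)=\bigl(C(a,B)\cap C^*(a,B)\bigr)-a$.

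I expect no genuine obstacle here beyond this final bookkeeping, since the entire content is the dictionary between one-element basis-exchange and fundamental circuits, once the cocircuit formula is granted. The one point I would be careful not to gloss over is the role of the hypothesis $a\ne a'$ in (ii): it is exactly what forces the spurious elements $a$ and $a'$ to drop out of the intersection, so that no $-a$ correction term is needed, in contrast with (i), where $a$ is shared and must be removed explicitly.
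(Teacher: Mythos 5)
Your proof is correct, and it is exactly the argument the paper intends: the paper states Proposition~\ref{prop3:1} without proof, as an immediate consequence of the fact $C^*(a,B)=\{b\in B \mid a\in C(b,A)\}+a$, and your write-up simply fills in the routine dictionary between one-element exchanges and fundamental circuits, including the correct bookkeeping of the elements $a$ and $a'$ (which explains the $-a$ in (i) and its absence in (ii)). No gaps; this matches the paper's approach.
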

From Part (ii) of Proposition~\ref{prop3:1} it immediately follows that
\begin{prop}\label{prop:KZ}
For any two bases $A$ and $B$ of a matroid $M$, and any two distinct elements $a,a'\in A$, $|Conn(a,a',A,B)|\neq1$.
\end{prop}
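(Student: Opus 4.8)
The engine of the proof is the standard orthogonality property of matroids: a circuit $C$ and a cocircuit $C^*$ never meet in exactly one element, i.e.\ $|C\cap C^*|\neq 1$ (see \cite{Oxley11}). The plan is therefore to exhibit $Conn(a,a',A,B)$ as the intersection of a single circuit and a single cocircuit of $M$, after which the conclusion is immediate. This is precisely what Part~(ii) of Proposition~\ref{prop3:1} achieves in the block-matroid case, so the real content is to extend that identity to arbitrary bases of an arbitrary matroid.

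First I would dispose of the degenerate case $a\in B$. If $a\in B$, then for $b\in B$ the set $B-b+a$ equals $B$ (when $b=a$) or has size $r(M)-1$ (when $b\neq a$); in the first case $A-a'+a=A-a'$ is not a basis, and in the second $B-b+a$ is not a basis. Hence $Conn(a,a',A,B)=\emptyset$, and $0\neq 1$.

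Assume now $a\notin B$, so the fundamental circuit $C(a,B)$ is defined. The main step is to establish the identity
\begin{equation*}
Conn(a,a',A,B)=C(a,B)\cap C^*(a',A),
\end{equation*}
where $C^*(a',A)$ denotes the fundamental cocircuit of $a'$ with respect to the basis $A$, i.e.\ the unique cocircuit contained in $(E\setminus A)+a'$. I would derive this from two elementary exchange facts: for $b\in B$, the set $B-b+a$ is a basis iff $b\in C(a,B)$, and $A-a'+b$ is a basis iff $b\in C^*(a',A)$ (the boundary value $b=a'$ is consistent, since $a'\in C^*(a',A)$ always and $A-a'+a'=A$). The care required is to check that the right-hand side introduces no spurious elements: because $C(a,B)\subseteq B+a$ and $C^*(a',A)\cap A=\{a'\}$, every element of $C(a,B)\cap C^*(a',A)$ lies in $B$ (the element $a$ is excluded since $a\neq a'$ forces $a\notin C^*(a',A)$), so the intersection is genuinely a subset of $B$ and matches the defining conditions of $Conn$. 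This handles uniformly the disjoint case and the case $a'\in A\cap B$, and for disjoint $A,B$ it recovers Proposition~\ref{prop3:1}(ii).

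With the identity in hand, $C(a,B)$ is a circuit and $C^*(a',A)$ is a cocircuit of $M$, so orthogonality yields $|Conn(a,a',A,B)|=|C(a,B)\cap C^*(a',A)|\neq 1$. The only real obstacle I anticipate is the bookkeeping in the displayed identity: one must track what happens to elements of $A\cap B$, and in particular to $a'$ itself when $a'\in B$, and confirm that the circuit--cocircuit intersection never strays outside the range of the $Conn$ definition. An equivalent route that sidesteps this is to pass to the minor $M|(A\cup B)/(A\cap B)$, a block matroid with disjoint bases $A\setminus B$ and $B\setminus A$, apply Proposition~\ref{prop3:1}(ii) there, and argue that $Conn$ is preserved by the minor; but verifying that preservation amounts to the same bookkeeping in another guise.
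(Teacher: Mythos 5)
Your proof is correct and follows essentially the same route as the paper: the paper obtains the statement immediately from Proposition~\ref{prop3:1}(ii), which exhibits $Conn(a,a',A,B)$ as the intersection of the circuit $C(a,B)$ with a cocircuit, and then invokes circuit--cocircuit orthogonality ($|C\cap C^{*}|\neq 1$). The only difference is that you carry out explicitly the bookkeeping (the case $a\in B$, elements of $A\cap B$, and $b=a'$) needed to extend that identity from the disjoint block-matroid setting of Proposition~\ref{prop3:1} to arbitrary bases, a reduction the paper treats as immediate.
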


Now, an analogue of Proposition~\ref{prop2:1} can be obtained in the case of a symmetric exchange.
\begin{prop}\label{prop3:4}
Let $A$ and $B$ be disjoint bases in a matroid $M$ and let $b\in Sym(a,A,B)$. Then, for any $a'\in A-a$
\begin{enumerate}
\item[\Text{(i)}] If $b\not\in C(a',B)$ and $a'\not\in C(b,A)$ then
\begin{equation*}
Sym(a',A_{a,b},B_{b,a}) = Sym(a',A,B)
\end{equation*}
\item[\Text{(ii)}] If $b\in C(a',B)$ and $a'\not\in C(b,A)$ then
\begin{equation*}
\begin{split}
Sym(a',A,B)\bigtriangleup Conn(a,a',A,B)&\subseteq Sym(a',A_{a,b},B_{b,a})\\
&\subseteq Sym(a',A,B)\cup Conn(a,a',A,B)
\end{split}
\end{equation*}
\item[\Text{(iii)}] If $b\not\in C(a',B)$ and $a'\in C(b,A)$ then
\begin{equation*}
\begin{split}
Sym(a',A,B)\bigtriangleup Conn(a',a,A,B)&\subseteq Sym(a',A_{a,b},B_{b,a})\\
&\subseteq Sym(a',A,B)\cup Conn(a',a,A,B)
\end{split}
\end{equation*}
\item[\Text{(iv)}] If $b\in C(a',B)$ and $a'\in C(b,A)$ then
\begin{equation*}
\begin{split}
&Sym(a',A,B)\bigtriangleup Sym(a,A,B)\bigtriangleup Conn(a',a,A,B)\bigtriangleup Conn(a,a',A,B)+a \\
&\subseteq Sym(a',A_{a,b},B_{b,a})\\
&\subseteq Sym(a',A,B)\cup Sym(a,A,B)\cup Conn(a',a,A,B)\cup Conn(a,a',A,B)+a
\end{split}
\end{equation*}
\end{enumerate}
\end{prop}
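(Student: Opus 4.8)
The plan is to reduce to the block matroid $M|(A\cup B)$ and then read everything off from fundamental circuits and cocircuits via Proposition~\ref{prop3:1}. Since every circuit $C(a',B)$ and cocircuit $C^*(a',B)$ is contained in $A\cup B$, restricting $M$ to $A\cup B$ leaves all of $Sym$, $Conn$, $C(\cdot,B)$, $C^*(\cdot,B)$ unchanged, so I may assume $E=A\cup B$. The exchange $a\leftrightarrow b$ replaces $B$ by $B_{b,a}=B-b+a$, which is a basis because $b\in C(a,B)$; indeed $b\in Sym(a,A,B)=(C(a,B)\cap C^*(a,B))-a$, so $b$ lies in both $C(a,B)$ and $C^*(a,B)$. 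Applying Proposition~\ref{prop3:1}(i) to the new disjoint bases $A_{a,b},B_{b,a}$ gives
\[
Sym(a',A_{a,b},B_{b,a})=\bigl(C(a',B_{b,a})\cap C^*(a',B_{b,a})\bigr)-a',
\]
so the whole problem reduces to tracking the circuit $C(a',B_{b,a})$ and the cocircuit $C^*(a',B_{b,a})$ and then intersecting.

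These two factors are governed by two independent conditions. The circuit is controlled by Proposition~\ref{prop2:1} with $x=a$, $y=a'$: if $b\in C(a',B)$ then $C(a,B)\bigtriangleup C(a',B)\subseteq C(a',B_{b,a})\subseteq C(a,B)\cup C(a',B)$, while if $b\notin C(a',B)$ then $C(a',B_{b,a})=C(a',B)$ by the remark preceding Proposition~\ref{prop2:1}. Dually, running Proposition~\ref{prop2:1} inside $M^*$ (where $B$ is a basis, $C^*(a',B_{b,a})$ is the fundamental circuit of $a'$ relative to $B_{b,a}$, and $b\in C^*(a,B)$) controls the cocircuit: if $b\in C^*(a',B)$ then $C^*(a,B)\bigtriangleup C^*(a',B)\subseteq C^*(a',B_{b,a})\subseteq C^*(a,B)\cup C^*(a',B)$, and otherwise $C^*(a',B_{b,a})=C^*(a',B)$. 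The block-matroid identity $C^*(a',B)=\{b'\in B\mid a'\in C(b',A)\}+a'$ turns the second hypothesis into the stated one, since $b\in C^*(a',B)\iff a'\in C(b,A)$. Thus cases (i)--(iv) are exactly the four combinations of ``circuit changes / stays'' and ``cocircuit changes / stays.''

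Cases (i)--(iii) fall out quickly. In (i) both factors are unchanged, so the right-hand side equals $\bigl(C(a',B)\cap C^*(a',B)\bigr)-a'=Sym(a',A,B)$. In (ii) the cocircuit is unchanged and the circuit is trapped by Proposition~\ref{prop2:1}; intersecting the two-sided inclusion with $C^*(a',B)$, using $(X\bigtriangleup Y)\cap Z=(X\cap Z)\bigtriangleup(Y\cap Z)$ and the identities $C(a,B)\cap C^*(a',B)=Conn(a,a',A,B)$ and $C(a',B)\cap C^*(a',B)=Sym(a',A,B)+a'$ from Proposition~\ref{prop3:1}, yields the claimed bounds once $a'$ is removed. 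Case (iii) is identical with the roles of circuit and cocircuit, and of $Conn(a,a',A,B)$ and $Conn(a',a,A,B)$, interchanged.

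The real work is case (iv), where both factors move. For the upper bound I expand $\bigl(C(a,B)\cup C(a',B)\bigr)\cap\bigl(C^*(a,B)\cup C^*(a',B)\bigr)$ into four intersections, which Proposition~\ref{prop3:1} identifies as $Sym(a,A,B)+a$, $Conn(a,a',A,B)$, $Conn(a',a,A,B)$ and $Sym(a',A,B)+a'$; deleting $a'$ leaves the stated union together with $a$. For the lower bound I use that $X\subseteq C(a',B_{b,a})$ and $Y\subseteq C^*(a',B_{b,a})$ force $X\cap Y\subseteq C(a',B_{b,a})\cap C^*(a',B_{b,a})$, applied to $X=C(a,B)\bigtriangleup C(a',B)$ and $Y=C^*(a,B)\bigtriangleup C^*(a',B)$. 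A short parity computation over $\mathbf{F}_2$ with the four indicator sets $C(a,B),C(a',B),C^*(a,B),C^*(a',B)$ shows that, away from the elements $a$ and $a'$, the intersection $X\cap Y$ coincides with $Sym(a',A,B)\bigtriangleup Sym(a,A,B)\bigtriangleup Conn(a',a,A,B)\bigtriangleup Conn(a,a',A,B)$. The main obstacle is precisely the bookkeeping at these two special elements: $a$ lies in both $X$ and $Y$ but in none of the four $Sym$/$Conn$ sets, which is what forces the extra ``$+a$'' in the statement, while $a'$ lies in $X\cap Y$ but must be discarded because $Sym(a',A_{a,b},B_{b,a})\subseteq B_{b,a}$ omits $a'$. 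Keeping track of $a$ and $a'$ throughout is the only delicate point; everything else is routine set algebra.
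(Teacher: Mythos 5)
Your proof is correct and follows essentially the same route as the paper: restrict to the block matroid $A\cup B$, control $C(a',B_{b,a})$ via Proposition~\ref{prop2:1} and $C^*(a',B_{b,a})$ via its dual (translating $a'\in C(b,A)$ into $b\in C^*(a',B)$), express $Sym(a',A_{a,b},B_{b,a})$ through Proposition~\ref{prop3:1}, and finish with the distributive/$\mathbf{F}_2$ identities for $\bigtriangleup$ and $\cap$. Your careful bookkeeping of the special elements $a$ and $a'$ simply makes explicit what the paper compresses into ``all cases now follow easily.''
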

\begin{proof}
By restricting to $A\cup B$ we may assume that $M$ is a block matroid. If $b\not\in C(a',B)$, then $C(a',B)=C(a',B_{b,a})$, otherwise $C(a',B)\triangle C(a,B)\subseteq C(a',B_{b,a})\subseteq C(a',B)\cup C(a,B)$ by Proposition~\ref{prop2:1}. Likewise, if $a'\not\in C(b,A)$ (equivalently, $b\not\in C^*(a',B)$), then $C^*(a',B) = C^*(a',B_{b,a})$, otherwise $C^*(a',B)\triangle C^*(a,B)\subseteq C^*(a',B_{b,a})\subseteq C^*(a',B)\cup C^*(a,B)$. By Proposition~\ref{prop3:1}, we have $Sym(a',A_{a,b},B_{b,a}) = (C(a',B_{b,a})\cap C^*(a',B_{b,a}))- a'$. All cases now follow easily from Proposition~\ref{prop3:1} along with the following equalities:
\begin{enumerate}
\item $(W\triangle X)\cap Y = (W\triangle Y)\cap ( X\triangle Y)$
and
\item $(W\triangle X)\cap (Y\triangle Z) = (W\cap Y)\triangle (W\cap
Z)\triangle (X\cap Y)\triangle (X\cap Z).$
\end{enumerate}
\end{proof}
\section{Three consecutive symmetric exchanges}\label{sec3}
In this section Proposition~\ref{prop3:4} is used to prove that three consecutive symmetric exchanges are always possible between any two bases of a matroid. Recall Notation~\ref{notn1} which will be frequently used here.
\begin{lem}\label{lem3:2}
Let $A$ and $B$ be two disjoint bases of a matroid $M$. Suppose $a_1,a_2\in A$ are serially and symmetrically exchangeable with $b_1,b_2\in B$ (in this order). Let $A_1=A_{a_1,b_1}$, $B_1=B_{b_1,a_1}$, $A_2=(A_1)_{a_2,b_2}$ and $B_2=(B_1)_{b_2,a_2}$. If $b_1,b_2\not\in Sym(a_1,B_2,A_2)$, then the only possible relations among the elements $a_1,a_2,b_1$ and $b_2$, relative to $A_2$ and $B_2$, are the ones described in Figure~\ref{fig1}.
\begin{figure}[h!]
  \centering
  \subfigure[]{\label{fig1a}\includegraphics[scale=0.35]{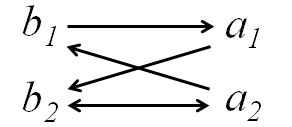}}
  \subfigure[]{\label{fig1b}\includegraphics[scale=0.35]{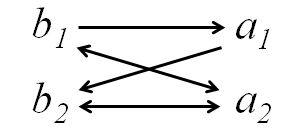}}
  \subfigure[]{\label{fig1c}\includegraphics[scale=0.35]{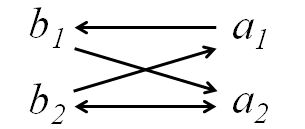}}
  \subfigure[]{\label{fig1d}\includegraphics[scale=0.35]{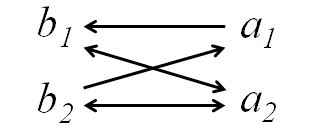}}
  \caption{Possible relations between the elements $a_1,a_2\in B_2$ and $b_1,b_2\in A_2$, given that $b_1,b_2\not\in Sym(a_1,B_2,A_2)$.}
  \label{fig1}
\end{figure}
\end{lem}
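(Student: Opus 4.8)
The plan is to translate every relation among $a_1,a_2,b_1,b_2$ relative to $A_2,B_2$ into a basis condition phrased in terms of the original bases $A,B$, and then to apply Proposition~\ref{prop3:4} to the single exchange $a_1\leftrightarrow b_1$, which carries $A,B$ to $A_1,B_1$.

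First I would build a dictionary. Since $A_2=A-a_1-a_2+b_1+b_2$ and $B_2=B-b_1-b_2+a_1+a_2$, each directed edge among the four elements rewrites as the assertion that some $A-\cdot+\cdot$ or $B-\cdot+\cdot$ is a basis; for instance $b_1\to a_1$ (that $B_2-a_1+b_1$ is a basis) becomes ``$B-b_2+a_2$ is a basis'', and $a_1\to b_1$ (that $A_2-b_1+a_1$ is a basis) becomes ``$A-a_2+b_2$ is a basis''. Carrying this out for all eight edges, the edge $b_2\leftrightarrow a_2$ is automatic because $B_2-a_2+b_2=B_1$ and $A_2-b_2+a_2=A_1$; the six remaining edges correspond to the six conditions that $B-b_2+a_2$, $A-a_2+b_2$, $B-b_2+a_1$, $A-a_1+b_2$, $B-b_1+a_2$, $A-a_2+b_1$ be bases. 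Reading these in pairs gives the clean statements $b_2\in Sym(a_2,A,B)$, $b_2\in Conn(a_1,a_2,A,B)$, and $b_2\in Conn(a_2,a_1,A,B)$, which are exactly the sets Proposition~\ref{prop3:4} controls. In the same way the hypothesis becomes transparent: $b_1\notin Sym(a_1,B_2,A_2)$ is equivalent to $b_2\notin Sym(a_2,A,B)$, and $b_2\notin Sym(a_1,B_2,A_2)$ is equivalent to $a_2$ and $b_1$ not being exchangeable relative to $A,B$.

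The key step is to apply Proposition~\ref{prop3:4} with $a=a_1$, $b=b_1$, $a'=a_2$ (valid since $b_1\in Sym(a_1,A,B)$ by the first exchange), recalling that $b_2\in Sym(a_2,A_1,B_1)$ by the second exchange. The relevant case is selected by whether $b_1\in C(a_2,B)$ (i.e.\ $B-b_1+a_2$ a basis) and whether $a_2\in C(b_1,A)$ (i.e.\ $A-a_2+b_1$ a basis). Case (iv) needs both, which is exactly the exchangeability of $a_2$ and $b_1$ that the hypothesis forbids; case (i) needs neither and yields $Sym(a_2,A_1,B_1)=Sym(a_2,A,B)$, forcing the forbidden $b_2\in Sym(a_2,A,B)$. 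Hence exactly one of cases (ii), (iii) holds. In case (ii) the inclusion $Sym(a_2,A_1,B_1)\subseteq Sym(a_2,A,B)\cup Conn(a_1,a_2,A,B)$, together with $b_2\in Sym(a_2,A_1,B_1)$ and $b_2\notin Sym(a_2,A,B)$, forces $b_2\in Conn(a_1,a_2,A,B)$; through the dictionary this pins down the $a_1$--$b_1$ edge, the $a_1$--$b_2$ edge, and the forward edge $b_1\to a_2$, leaving only the reverse edge $a_2\to b_1$ free and so giving two configurations. Case (iii) is symmetric, using $Conn(a_2,a_1,A,B)$, with $a_2\to b_1$ forced and $b_1\to a_2$ free. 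Translating the forced edges back through the dictionary yields exactly the four diagrams of Figure~\ref{fig1}.

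I expect the main obstacle to be bookkeeping rather than depth: keeping directions and indices consistent when passing between relations relative to $A_2,B_2$ and basis conditions relative to $A,B$, and correctly matching each pair of conditions to the $Sym$ and $Conn$ sets on which Proposition~\ref{prop3:4} acts. Once the dictionary is fixed, ruling out cases (i) and (iv) and running the two membership arguments are short.
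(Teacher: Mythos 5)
Your proposal is correct and takes essentially the same route as the paper: both arguments hinge on a single application of Proposition~\ref{prop3:4} to a pair of bases one exchange apart, use the hypothesis $b_1,b_2\not\in Sym(a_1,B_2,A_2)$ to rule out cases (i) and (iv), and then read off the forced $Conn$-membership in cases (ii) and (iii) to pin down the edges, leaving one reverse edge free in each case. The only difference is the instantiation: the paper applies the proposition to undoing the second exchange (comparing $Sym(a_1,B_2,A_2)$ with $Sym(a_1,B_1,A_1)$), so everything stays relative to $A_2,B_2$ and no translation is needed, whereas you apply it to the first exchange (comparing $Sym(a_2,A,B)$ with $Sym(a_2,A_1,B_1)$) and recover the relations relative to $A_2,B_2$ through your dictionary, which you have set up correctly.
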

\begin{proof}
We apply Proposition~\ref{prop3:4} to the exchange of $b_2$ with $a_2$ relative to $A_2$ and $B_2$ (that is, exchanging them back). We have $A_1=A_2-b_2+a_2$ and $B_1=B_2-a_2+b_2$, and we look at $Sym(a_1,B_1,A_1)$. Since $b_1\not\in Sym(a_1,B_2,A_2)$ and $b_1\in Sym(a_1,B_1,A_1)$ this is not case (i) of Proposition~\ref{prop3:4}, and since $b_2\not\in Sym(a_1,B_2,A_2)$ this is not case (iv) either. Suppose Proposition~\ref{prop3:4}(ii) applies. That is, $b_2\in C(a_1,A_2)$ and $a_1\not\in C(b_2,B_2)$. By Proposition~\ref{prop3:4}(ii), $Sym(a_1,B_2,A_2)\bigtriangleup Conn(a_2,a_1,B_2,A_2)\subseteq Sym(a_1,B_1,A_1)\subseteq Sym(a_1,B_2,A_2)\cup Conn(a_2,a_1,B_2,A_2)$. Since $b_1\not\in Sym(a_1,B_2,A_2)$ and $b_1\in Sym(a_1,B_1,A_1)$ we must have that $b_1\in Conn(a_2,a_1,B_2,A_2)$. This implies either Figure~\ref{fig1a} or Figure~\ref{fig1b}. If Proposition~\ref{prop3:4}(iii) applies, then $b_2\not\in C(a_1,B_2)$ and $a_1\in C(b_2,A_2)$. By Proposition~\ref{prop3:4}(iii), $Sym(a_1,B_2,A_2)\bigtriangleup Conn(a_1,a_2,B_2,A_2)\subseteq Sym(a_1,B_1,A_1)\subseteq Sym(a_1,B_2,A_2)\cup Conn(a_1,a_2,B_2,A_2)$. Since $b_1\not\in Sym(a_1,B_2,A_2)$ we must have that $b_1\in Conn(a_1,a_2,B_2,A_2)$. This implies either Figure~\ref{fig1c} or Figure~\ref{fig1d}.
\end{proof}
In order to prove the main result we shall need two more definitions.
\begin{defn}
Let $A$ and $B$ be bases of a matroid $M$, and let $A'$ and $B'$ be bases obtained from $A$ and $B$ by sequentially exchanging elements, starting from $A$ and $B$. The \emph{distance} $d((A,B),(A',B'))$ between the two pairs of bases is defined as $|A\setminus A'|$.
\end{defn}
For example, the bases $A_2$ and $B_2$ in Figure~\ref{fig1_5} were obtained from $A$ and $B$ by sequentially exchanging $a_1$ with $b_1$ and $a_2$ with $b_2$. Thus, $d((A,B),(A_2,B_2))=2$.

\begin{figure}[h!]
  \centering
  \subfigure[]{\label{fig1_5a}\includegraphics[scale=0.35]{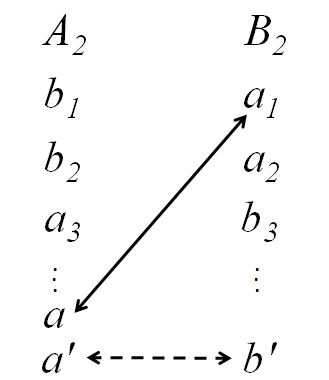}}
  \subfigure[]{\label{fig1_5b}\includegraphics[scale=0.35]{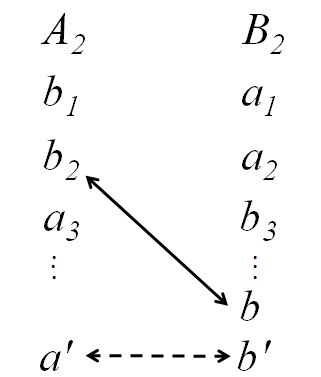}}
  \caption{Examples of correcting exchanges. The solid arrow represents the correcting exchange. The dashed arrow represents the increasing exchange.}
  \label{fig1_5}
\end{figure}

\begin{defn}
Let $A$ and $B$ be bases of a matroid $M$ and let $A'$ and $B'$ be bases obtained from $A$ and $B$ by sequentially exchanging elements, starting from $A$ and $B$. A \emph{correcting exchange} between $A'$ and $B'$ is an exchange, relative to $A'$ and $B'$, which preserves the distance from $(A,B)$, after which an additional exchange, which increases the distance from $(A,B)$, is enabled. This last additional exchange will be refered to as the \emph{increasing exchange}.
\end{defn}
In other words, a correcting exchange ``exchanges back'' an element in order to obtain a new exchange that increases the distance from $(A,B)$.

Figure~\ref{fig1_5} illustrates two examples of correcting exchanges. The solid arrow represents the correcting exchange, which preserves the distance 2 from $(A,B)$. The dashed arrow represents the increasing exchange, after which the distance from $(A,B)$ increases to 3.
\begin{thm}\label{thm1}
If $A$ and $B$ are disjoint bases of a matroid $M$, then there exist three elements in $A$ that can be serially and symmetrically exchanged with some three elements in $B$.
\end{thm}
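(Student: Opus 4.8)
The plan is to build the exchange greedily, measuring progress by the distance $d((A,B),(\cdot,\cdot))$ introduced above, and to use the correcting-exchange device to escape the situation where the greedy process stalls. Throughout I would restrict to the block matroid on $A\cup B$, exactly as in the proof of Proposition~\ref{prop3:4}, so that the cocircuits $C^{*}(\cdot,B)$ and the identities of Proposition~\ref{prop3:1} are available.

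First I would dispose of the two easy steps. A single symmetric exchange exists by the classical symmetric exchange property, so fix $a_1\in A$ and $b_1\in Sym(a_1,A,B)$ and pass to $(A_1,B_1)$ at distance $1$. To reach distance $2$, pick $a_2\in A\cap A_1$ and apply the symmetric exchange property again in $(A_1,B_1)$; the only degenerate possibility is that the unique available partner of $a_2$ is the element $a_1$ now sitting in $B_1$, and this I would rule out with Proposition~\ref{prop:KZ}, whose non-singleton constraint on $Conn$ forbids such a forced reverse move. Exchanging then yields $(A_2,B_2)$ at distance $2$, with $a_1,a_2\in B_2$ and $b_1,b_2\in A_2$ as the four displaced elements.

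The crux is the passage from distance $2$ to distance $3$. If some $a'\in A\cap A_2$ is exchangeable with some $b'\in B\cap B_2$ relative to $(A_2,B_2)$, this \emph{increasing exchange} finishes the proof, the three exchanged elements being $a_1,a_2,a'$ and $b_1,b_2,b'$. So assume no such increasing exchange exists and look instead for a \emph{correcting exchange} among the displaced elements. Consider $Sym(a_1,B_2,A_2)$, which is nonempty by the symmetric exchange property applied to $a_1\in B_2$. If it meets $\{b_1,b_2\}$, the corresponding move rearranges the displaced elements while controlling the distance, and Proposition~\ref{prop:KZ} again supplies a second partner that opens up as an increasing exchange. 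The genuinely remaining case is precisely the hypothesis $b_1,b_2\notin Sym(a_1,B_2,A_2)$ of Lemma~\ref{lem3:2}.

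In that case Lemma~\ref{lem3:2} restricts the relations among $a_1,a_2,b_1,b_2$ relative to $(A_2,B_2)$ to the four configurations of Figure~\ref{fig1}, and for each I would exhibit, as illustrated in Figure~\ref{fig1_5}, a concrete correcting exchange (one that exchanges a displaced element back while preserving distance $2$) after which Proposition~\ref{prop3:4} certifies that a new symmetric exchange with a \emph{correct} element of $B$ is enabled, giving the increasing exchange to distance $3$. I expect the main obstacle to be exactly this final case analysis: checking, in each of the four configurations, that the correcting exchange really preserves the distance and really creates an increasing exchange, and that the three elements it produces on each side are mutually distinct. The inclusions of Proposition~\ref{prop3:4} together with the set-algebra identities in its proof, supplemented by the non-singleton constraint of Proposition~\ref{prop:KZ}, are the tools I would use to discharge each configuration.
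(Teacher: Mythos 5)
Your proposal stalls exactly where the paper's proof has to work hardest: you never show that the three elements you end up moving can be exchanged \emph{serially starting from $(A,B)$}. A correcting exchange such as $a\leftrightarrow a_1$ (swapping the original element $a_1$ back into the $A$-side) followed by an increasing exchange does produce a pair of bases at distance $3$ from $(A,B)$, but the route taken is a walk of four one-element swaps, one of which moves an element of $A$ out and then back in; this is not a serial symmetric exchange of three elements of $A$ with three elements of $B$. To extract one, you must prove that the pair of bases obtained after the correcting exchange (and before the increasing exchange) can itself be reached from $(A,B)$ by two legitimate consecutive symmetric exchanges --- and note that the validity of $a_2\leftrightarrow b_2$ relative to $(A_1,B_1)$ says nothing about its validity relative to $(A,B)$, so this is a genuine claim. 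The paper calls this ``the key step of the proof''; it is the content of Claims~\ref{claim:3}--\ref{claim:5}, which occupy most of the argument: the criterion of Claim~\ref{claim:3}, the shortcut of Claim~\ref{claim:4} when the correcting exchange involves $a_2$ or $b_2$, and the long case analysis of Claim~\ref{claim:5} (relations (\ref{eq3:2})--(\ref{eq3:3}), the auxiliary elements $b'$ and $b''$, cases (1)--(3)) needed to force that criterion. Your closing paragraph shows this is not merely an omitted verification: you identify the remaining work as checking distance preservation, creation of the increasing exchange, and distinctness of the six elements, which is precisely not where the difficulty lies.

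There are two further problems. First, your reduction to the hypothesis of Lemma~\ref{lem3:2} is faulty: if $Sym(a_1,B_2,A_2)$ meets $\{b_1,b_2\}$, the corresponding move does not ``control the distance'' --- exchanging $a_1$ back with $b_1$ or $b_2$ yields the pair $(A-a_2+b_2,\,B-b_2+a_2)$ or $(A-a_2+b_1,\,B-b_1+a_2)$, which is at distance $1$ from $(A,B)$ --- and Proposition~\ref{prop:KZ}, a statement that $|Conn(\cdot,\cdot,\cdot,\cdot)|\neq 1$, does not ``supply a second partner that opens up as an increasing exchange'' from there. The same objection applies to your step from distance $1$ to distance $2$: $Sym$ sets can be singletons (two disjoint parallel pairs already give $|Sym|=1$), Proposition~\ref{prop:KZ} constrains $Conn$ sets rather than $Sym$ sets, and the existence of a length-two serial symmetric exchange is a theorem of \cite{KZ2011}, which is how the paper obtains $(A_2,B_2)$. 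Second, even granting the hypothesis of Lemma~\ref{lem3:2}, its four configurations only constrain the relations among $a_1,a_2,b_1,b_2$; they do not by themselves produce a correcting exchange involving elements of $A\cap A_2$ and $B\cap B_2$. The paper manufactures its correcting exchanges from a pigeonhole argument (Claim~\ref{claim:1}) that needs $r(M)\geq 5$ and the no-third-exchange assumption to find $a,a'\in A\cap A_2$ sharing a partner $a_i$, together with Proposition~\ref{prop3:4}(iv) (Claim~\ref{claim:2}); your proposal invokes neither the rank bound (ranks at most $4$ are handled by citing \cite{KZ2011}) nor any substitute for this construction.
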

\begin{proof}
The outline of the proof is as follows. We start with a serial symmetric exchange of length two (which exists by \cite{KZ2011}) between some $a_1,a_2\in A$ and $b_1,b_2\in B$, obtaining two bases $A_2$ and $B_2$, respectively. Assuming there is no third exchange between an element of $A$ and an element of $B$ it is shown that there exists a correcting exchange between $A_2$ and $B_2$. The key step of the proof is to show that the two bases obtained after the correcting exchange (and before performing the increasing exchange) can be obtained from $A$ and $B$ by performing only two exchanges. These two exchanges, together with the increasing exchange imply the claim of the theorem.

Since we know from \cite{KZ2011} that when $r(M)\leq 4$ there exists a full serial symmetric exchange, we may assume that $r(M)\geq 5$. Also, by \cite{KZ2011}, there exists a serial symmetric exchange of length 2. Let $a_1,a_2\in A$ and $b_1,b_2\in B$ be such that $A_1=A-a_1+b_1$, $B_1=B-b_1+a_1$, $A_2=A_1-a_2+b_2$ and $B_2=B_1-b_2+a_2$ are all bases, and we assume that there is no third exchange.

The first two claims imply that a correcting exchange exists.
\begin{claim}\label{claim:1}
There exist $a,a'\in A\cap A_2$, $b\in B\cap B_2$ and $i\in\{1,2\}$ such that $a_i\in Sym(a,A_2,B_2)\cap Sym(a',A_2,B_2)$ and $b\in Conn(a,a',A_2,B_2)$ (Figure~\ref{fig2}).
\end{claim}
\begin{figure}[h!]
\begin{center}
\includegraphics[scale=0.35]{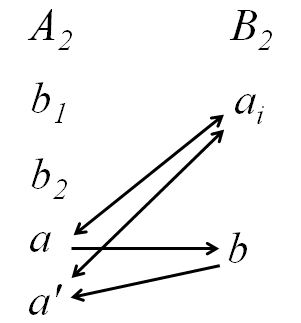}
\end{center}
\caption{Existence of  $a,a'$ and $b$ such that $a_i\in Sym(a,A_2,B_2)\cap Sym(a',A_2,B_2)$ and $b\in Conn(a,a',A_2,B_2)$ (Claim 1).}
\label{fig2}
\end{figure}
\begin{proof}[Proof of Claim~\ref{claim:1}]
\renewcommand{\qedsymbol}{}
Since each element of $A_2$ is symmetrically exchangeable with some element of $B_2$, the assumption that there is no third exchange implies that every element of $A\cap A_2$ must be symmetrically exchangeable with either $a_1$ or $a_2$. Since $r(M)\geq5$ there must be two elements $a,a'\in A\cap A_2$ that are symmetrically exchangeable with the same element of $\{a_1,a_2\}$. We assume it is $a_1$ (the reader can check that choosing $a_2$ yields the same conclusion). By (\ref{eq1:1}) $a_1\in Conn(a,a',A_2,B_2)\cap Conn(a',a,A_2,B_2)$. By Proposition~\ref{prop:KZ} $Conn(a,a',A_2,B_2)$ must contain at least one other element $x\in B_2$ and $Conn(a',a,A_2,B_2)$ must contain at least one other element $x'\in B_2$. There are two options for $x$ (and for $x'$): $x\in B\cap B_2$ and $x=a_2$. If $x\in B\cap B_2$ or $x'\in B\cap B_2$, then $b=x$ or $b=x'$ and we are done (in the case that $b=x'$ we exchange the roles of $a$ and $a'$). Hence, we assume that $x=x'=a_2$. Then $a_1,a_2\in Sym(a,A_2,B_2)$. Now, $C(a,B_2)$ must contain at least one element $b\in B_2\cap B$ (otherwise $C(a,B_2)$ would contain only elements of $A$). Similarly, $C(b,A_2)$ must contain an element $a''\in A_2\cap A$. Thus, $b\in Conn(a,a'',A_2,B_2)\cap B$. Since both $a_1$ and $a_2$ are in $Sym(a,A_2,B_2)$ and at least one of them is in $Sym(a'',A_2,B_2)$ (since we assume there is no third exchange), Claim~\ref{claim:1} follows with $a''$ instead of $a'$.
\end{proof}
\begin{claim}\label{claim:2}
Keeping the notation of Claim~\ref{claim:1}, let $A_3=(A_2)_{a,a_i}$, $B_3=(B_2)_{a_i,a}$, $A_3'=(A_2)_{a',a_i}$ and $B_3'=(B_2)_{a_i,a'}$. Then $b\in Sym(a',A_3,B_3)$ and $b\in Sym(a,A_3',B_3')$. In other words, the exchanges $a\leftrightarrow a_i$ and $a'\leftrightarrow a_i$ in Figure~\ref{fig2} are correcting exchanges.
\end{claim}
\begin{proof}[Proof of Claim~\ref{claim:2}]
\renewcommand{\qedsymbol}{}
By Proposition~\ref{prop3:4}(iv), $Sym(a',A_2,B_2)\bigtriangleup Sym(a,A_2,B_2)\bigtriangleup Conn(a',a,A_2,B_2)\bigtriangleup Conn(a,a',A_2,B_2)+a\subseteq Sym(a',A_3,B_3)$ and $Sym(a',A_2,B_2)\bigtriangleup Sym(a,A_2,B_2)\bigtriangleup Conn(a',a,A_2,B_2)\bigtriangleup Conn(a,a',A_2,B_2)+a'\subseteq Sym(a,A_3',B_3')$. Since $b$ is not in any of the sets $Sym(a,A_2,B_2)$, $Sym(a',A_2,B_2)$ and $Conn(a',a,A_2,B_2)$ (since we assume there is no third exchange), and $b\in Conn(a,a',A_2,B_2)$, it follows that $b\in Sym(a',A_3,B_3)$ and $b\in Sym(a,A_3',B_3')$.
\end{proof}
In order to complete the proof of Theorem~\ref{thm1} we must show that the two bases obtained as a result of the correcting exchange can be obtained from $A$ and $B$ in two consecutive symmetric exchanges. These two exchanges, along with the increasing exchange, yielded by the correcting exchange, will constitute the three desired exchanges. The next two claims are easy observations and are brought here for future reference.
\begin{claim}\label{claim:3}
Let $x,y\in A$ and $z,w\in B$ be such that $A':=(A-\{x,y\})\cup\{z,w\}$ and $B':=(B-\{z,w\})\cup\{x,y\}$ are bases. If some element of $\{x,y\}$ can be symmetrically exchanged, relative to $B'$ and $A'$, with an element of $\{z,w\}$, then $A'$ and $B'$ can be obtained from $A$ and $B$ in two consecutive symmetric exchanges.
\end{claim}
\begin{claim}\label{claim:4}
If in a correcting exchange $x=a_2$ or $x=b_2$ then a serial symmetric exchange of length three exists.
\end{claim}
\begin{proof}[Proof of Claim~\ref{claim:4}]
\renewcommand{\qedsymbol}{}
We may assume that $x=a_2$. The case $x=b_2$ follows by symmetry. Suppose that the correcting exchange $a_2$ is exchanged with $a\in A_2\cap A$. Since $b_2\in Sym(a_2,A_2,B_2)$, it follows that $b_2\in Sym(a,(A_2)_{a,a_2},(B_2)_{a_2,a})$. The result follows from Claim~\ref{claim:3}.
\end{proof}
\begin{claim}\label{claim:5}
There exists a correcting exchange such that the two bases obtained after performing it can be obtained from $A$ and $B$ in two symmetric exchanges.
\end{claim}
\begin{proof}[Proof of Claim~\ref{claim:5}]
\renewcommand{\qedsymbol}{}
This is the main part of the proof of Theorem~\ref{thm1}. The outline of the proof goes as follows. We start with the correcting exchanges $a\leftrightarrow a_i$ and $a'\leftrightarrow a_i$ from Claim~\ref{claim:2} (Figure~\ref{fig2}), and assume that the bases obtained as a result of each one of these exchanges cannot be obtained from $A$ and $B$ in two symmetric exchanges. These assumptions yield information about the relations between the elements $a_1, a_2, a, a'\in A_2$ and the elements $b_1,b_2,b\in B_2$. Next, we derive all missing the information about these relations by assuming that there is no third exchange beyond $A_2$ and $B_2$. These relations will imply the existence of at least one more element $b'\in Conn(a',a,A_2,B_2)$. We next study all possibilities for the relations between $b_1,b_2\in A_2$ and $b,b'\in B_2$ and show that each such possibility implies the existence of a correcting exchange and a pair of bases that can be obtained from $A$ and $B$ in two symmetric exchanges. This will complete the proof.

Let $A_3$, $B_3$, $A_3'$ and $B_3'$ be as in Claim~\ref{claim:2}. By Claim~\ref{claim:4} we may assume that $i=1$ in Claims~\ref{claim:1} and \ref{claim:2}. Also, by Claim~\ref{claim:3}, we may assume that the intersection of $\{b_1,b_2\}$ with each of  $Sym(a,B_3,A_3)$, $Sym(a_2,B_3,A_3)$, $Sym(a',B_3',A_3')$, and $Sym(a_2,B_3',A_3')$ is empty. Going one step back to $A_2$ and $B_2$, these assumptions imply that the relations among $a_1, a_2, b_1$ and $b_2$ must be described by one of diagrams in Figure~\ref{fig1}. We consider the cases in Figure~\ref{fig1a} and \ref{fig1b}. The other two cases will follow by duality.  Figure~\ref{fig3} illustrates these two cases (the only difference between the two diagrams is that the arrow from $a_2$ to $b_1$ is one-sided in Figure~\ref{fig1a} and two-sided in Figure~\ref{fig1b}).
\begin{figure}[h!]
  \centering
  \subfigure[]{\label{fig3a}\includegraphics[scale=0.35]{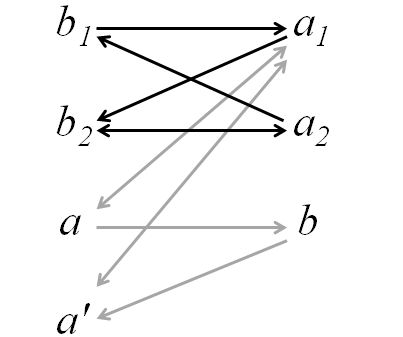}}
  \subfigure[]{\label{fig3b}\includegraphics[scale=0.35]{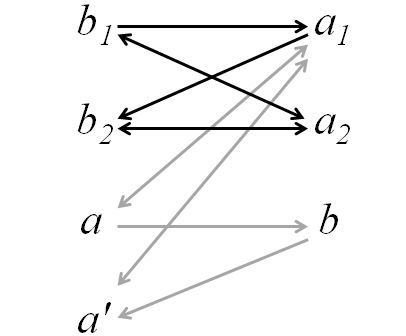}}
  \caption{The two possible scenarios given that $b_1\in Sym(a,A_2,B_2)\cap Sym(a',A_2,B_2)$ and $b_1,b_2\not\in Sym(a_1,B_2,A_2)$.}
  \label{fig3}
\end{figure}

We also assume that after the exchange $a\leftrightarrow a_1$, or the exchange $a'\leftrightarrow a_1$, the exchanges, depicted by two-sided arrows, between $b_2$ and $a_2$ (in both figures), and between $b_1$ and $a_2$ (in Figure~\ref{fig3b}), are ``ruined'' (otherwise we are done by Claim~\ref{claim:3}). We now look at the exchange $a\leftrightarrow a_1$ relative to $A_2$ and $B_2$. Since $b_2\in C(a_1,A_2)$ and $a_1\not\in C(b_2,B_2)$, we look at Proposition~\ref{prop3:4}(iii) which asserts that $Sym(b_2,A_2,B_2)\bigtriangleup Conn(b_2,a,A_2,B_2)\subseteq Sym(b_2,(A_2)_{a,a_1},(B_2)_{a_1,a})$. Since $a_2\in Sym(b_2,A_2,B_2)$ and we require that $a_2\not\in Sym(b_2,(A_2)_{a,a_1},(B_2)_{a_1,a})$, it follows that $a_2\in Conn(b_2,a,A_2,B_2)$. In particular,
\begin{equation}\label{eq3:2}
a\in C(a_2,A_2).
\end{equation}
Similarly, requiring that $a_2\not\in Sym(b_2,(A_2)_{a',a_1},(B_2)_{a_1,a'})$ implies that
\begin{equation}\label{eq3:2:1}
a'\in C(a_2,A_2).
\end{equation}
\begin{figure}[h!]
  \centering
  \subfigure[]{\label{fig4a}\includegraphics[scale=0.35]{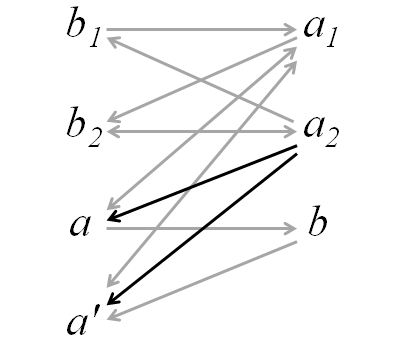}}
  \subfigure[]{\label{fig4b}\includegraphics[scale=0.35]{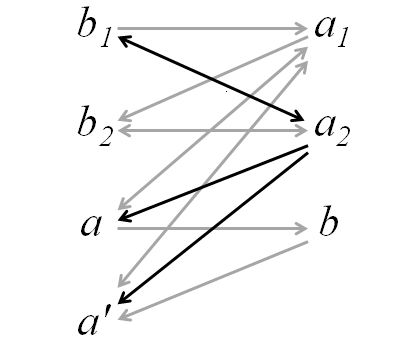}}
  \caption{Requiring $a_2\not\in Sym(b_2,(A_2)_{a,a_1},(B_2)_{a_1,a})$ and $a_2\not\in Sym(b_2,(A_2)_{a',a_1},(B_2)_{a_1,a'})$ implies that $a,a'\in C(a_2,A_2)$ }
  \label{fig4}
\end{figure}

We now assume that (\ref{eq3:2}) and (\ref{eq3:2:1}) hold (Figure~\ref{fig4}) and look at the other ends of the two new arrows representing these relations. Suppose $a_2\in C(a',B_2)$. Together with (\ref{eq3:2:1}) we have an exchange $a'\leftrightarrow a_2$, relative to $A_2$ and $B_2$. If $a_2\not\in C(a,B_2)$ we apply Proposition~\ref{prop3:4}(iii) which asserts that $Sym(a,A_2,B_2)\bigtriangleup Conn(a,a',A_2,B_2)\subseteq Sym(a,(A_2)_{a',a_2},(B_2)_{a_2,a'})$ (note that the roles of $a$ and $a'$ are interchanged with respect to their roles in Proposition~\ref{prop3:4}). Since $b\in Conn(a,a',A_2,B_2)$ but $b\not\in Sym(a,A_2,B_2)$ (assuming there is no third exchange), it follows that $b\in Sym(a,(A_2)_{a',a_2},(B_2)_{a_2,a'})$. If $a_2\in C(a,B_2)$, we apply Proposition~\ref{prop3:4}(iv) which asserts that $Sym(a,A_2,B_2)\bigtriangleup Sym(a',A_2,B_2)\bigtriangleup Conn(a,a',A_2,B_2)\bigtriangleup Conn(a',a,A_2,B_2)+a'\subseteq Sym(a,(A_2)_{a',a_2},(B_2)_{a_2,a'})$. Since $b\in Conn(a,a',A_2,B_2)$ but $b\not\in Conn(a',a,A_2,B_2)$, $b\not\in Sym(a,A_2,B_2)$ and $b\not\in Sym(a',A_2,B_2)$ (assuming there is no third exchange) we have again that $b\in Sym(a,(A_2)_{a',a_2},(B_2)_{a_2,a'})$. In any case we obtain that if $a_2\in C(a',B_2)$ then $b\in Sym(a,(A_2)_{a',a_2},(B_2)_{a_2,a'})$. Thus, the exchange $a'\leftrightarrow a_2$ is a correcting exchange, and since $a_2$ is involved, we are done by Claim~\ref{claim:4}. Hence, we now assume that
\begin{equation}\label{eq3:3}
    a_2\not\in C(a',B_2).
\end{equation}

We consider the case in Figure~\ref{fig4b} and we require that $a_2\not\in Sym(b_1,(A_2)_{a',a_1},(B_2)_{a_1,a'})$ (otherwise we are done, by Claim~\ref{claim:3}). By Proposition~\ref{prop3:4}(ii), $Sym(b_1,A_2,B_2)\bigtriangleup Conn(a',b_1,A_2,B_2)\subseteq Sym(b_1,(A_2)_{a',a_1},(B_2)_{a_1,a'})$. Since $a_2\in Sym(b_1,A_2,B_2)$ and we require that $a_2\not\in Sym(b_1,(A_2)_{a',a_1},(B_2)_{a_1,a'})$, we must have that $a_2\in Conn(a',b_1,A_2,B_2)$. In particular, $a_2\in C(a',B_2)$. This contradicts (\ref{eq3:3}). Thus, we may drop the case in Figure~\ref{fig4b} and assume that the arrow between $a_2$ and $b_1$ is one-sided.

Next, $Conn(a',a,A_2,B_2)$ contains $a_1$ and does not contain $a_2$ and $b$ (this is easy to see in Figure~\ref{fig4a}). By Proposition~\ref{prop:KZ}, there must be some $b'\in B_2\cap B$ such that $b'\in Conn(a',a,A_2,B_2)$ (Figure~\ref{fig5}). We now consider the other end of the arrow between $a_2$ and $a$. If $a_2\in C(a,B_2)$, then, by (\ref{eq3:2}), $a_2\in Sym(a,A_2,B_2)$. We exchange $a$ with $a_2$ and consider $a'$. Since $a'\in C(a_2,A_2)$ and $a_2\not\in C(a',B_2)$ we look at Proposition~\ref{prop3:4}(iii) which asserts $Sym(a',A_2,B_2)\bigtriangleup Conn(a',a,A_2,B_2)\subseteq Sym(a',(A_2)_{a,a_2},(B_2)_{a_2,a})$. Since $b'\not\in Sym(a',A_2,B_2)$ and $b'\in Conn(a',a,A_2,B_2)$, we have $b'\in Sym(a',(A_2)_{a,a_2},(B_2)_{a_2,a})$. Thus, we have a correcting exchange involving $a_2$, and we are done by Claim~\ref{claim:4}. Hence, we may assume that $a_2\not\in C(a,B_2)$. Figure~\ref{fig5} represents all the relations obtained so far, between elements of $A_2$ and $B_2$, assuming there is no third exchange.
\begin{figure}[h!]
\begin{center}
\includegraphics[scale=0.35]{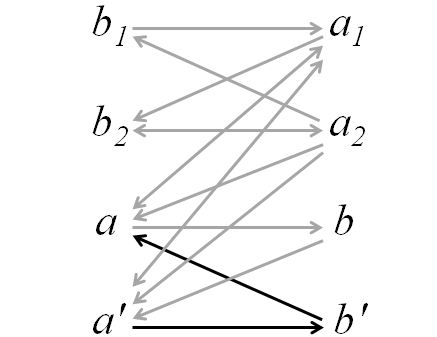}
\end{center}
\caption{All the relations between elements of $A_2$ and $B_2$ assuming there is no third exchange.}
\label{fig5}
\end{figure}

We still have no information about the relations between $\{b_1,b_2\}$ and $\{b,b'\}$. Assuming there is no third exchange, each of $b$ and $b'$ must be exchangeable, relative to $A_2$ and $B_2$, with at least one of $b_1$ and $b_2$. There are three options (ranked by complexity of verification):
\begin{enumerate}
  \item both $b$ and $b'$ are exchangeable with $b_2$,
  \item both $b$ and $b'$ are exchangeable with $b_1$, and
  \item each of $b$ and $b'$ is exchangeable with a different $b_i$ ($i=1,2$).
\end{enumerate}

It will be shown that in each of these cases there exists a serial symmetric exchange of length three, by finding a correcting exchange that yields two bases that can be obtained from $A$ and $B$ in two symmetric exchanges.

(1) Suppose $b$ and $b'$ are both exchangeable with $b_2$. Since $a'\in Conn(b,b',B_2,A_2)$, the situation is symmetric to the one described in Figure~\ref{fig2}, with $b_2$ assuming the role of $a_i$, $b$ and $b'$ assuming the roles of $a$ and $a'$ respectively, and $a'$ assuming the role of $b$ (Figure~\ref{fig6a}). Thus, $b_2\leftrightarrow b$ and $b_2\leftrightarrow b'$ are correcting exchanges, and we are done by Claim~\ref{claim:4}.
\begin{figure}[h!]
  \centering
  \subfigure[]{\label{fig6a}\includegraphics[scale=0.35]{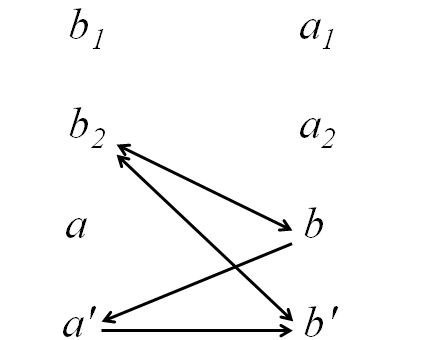}}
  \subfigure[]{\label{fig6b}\includegraphics[scale=0.35]{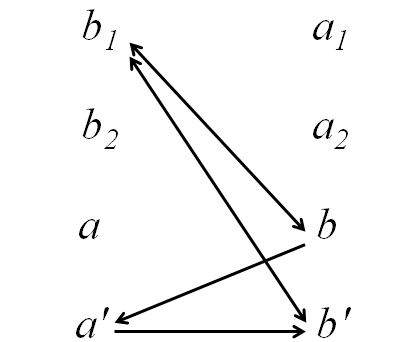}}
  \caption{Cases (1) and (2) are symmetric to the case described in Figure~\ref{fig2}. }
  \label{fig6}
\end{figure}

(2) Suppose $b$ and $b'$ are both exchangeable with $b_1$. Again, the situation is symmetric to the one described in Figure~\ref{fig2}, with $b_1$ assuming the role of $a_i$, $b$ and $b'$ assuming the roles of $a$ and $a'$ respectively, and $a'$ assuming the role of $b$ (Figure~\ref{fig6b}). Thus $b_1\leftrightarrow b$ and $b_1\leftrightarrow b'$ are correcting exchanges. It is left to the reader to verify that in order to ``ruin'' the exchange $b_2\leftrightarrow a_2$ as a result of either one of the exchanges $b_1\leftrightarrow b$ and $b_1\leftrightarrow b'$, we must require the one-sided relations $b_2\leftarrow b$ and $b_2\leftarrow b'$ (Figure~\ref{fig7a}) (the procedure is very similar to the one used above to obtain the one sided relations $a_2\rightarrow a$ and $a_2\rightarrow a'$ in Figure~\ref{fig4}). Now, $a_2\in Conn(b_2,a,A_2,B_2)$ and $a_1,b,b'\not\in Conn(b_2,a,A_2,B_2)$ (this is easy to see in Figure~\ref{fig4}). Thus, by Proposition~\ref{prop:KZ}, there exists an element $b''\in Conn(b_2,a,A_2,B_2)\cap B$ (Figure~\ref{fig7b}).
\begin{figure}[h!]
  \centering
  \subfigure[]{\label{fig7a}\includegraphics[scale=0.35]{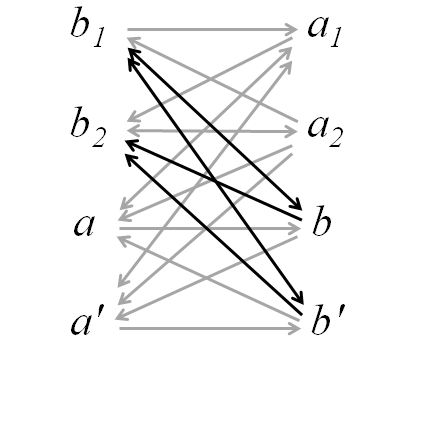}}
  \subfigure[]{\label{fig7b}\includegraphics[scale=0.35]{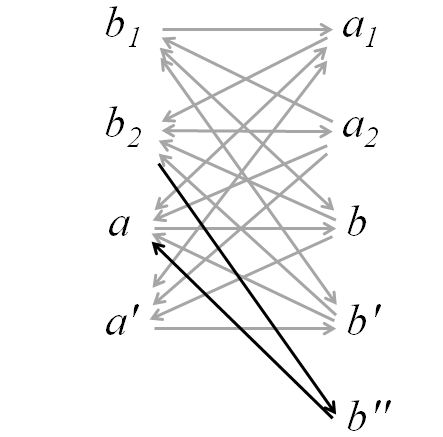}}
  \caption{The relations implied in Case (2). }
  \label{fig7}
\end{figure}

Assuming there is no third exchange, $b''$ must be symmetrically exchangeable with at least one of $b_1$ and $b_2$. Suppose $b_2\in Sym(b'',B_2,A_2)$. If we perform the exchange $b_2\leftrightarrow b''$, then the element $a$ corresponds to Proposition~\ref{prop3:4}(iii), which asserts that $Sym(a,A_2,B_2)\bigtriangleup Conn(a,b_2,A_2,B_2)\subseteq Sym(a,(A_2)_{b_2,b''},(B_2)_{b'',b_2})$. Since $b\in Conn(a,b_2,A_2,B_2)\setminus Sym(a,A_2,B_2)$ it follows that $b\in Sym(a,(A_2)_{b_2,b''},(B_2)_{b'',b_2})$. Thus $b_2\leftrightarrow b''$ is a correcting exchange and we are done by Claim~\ref{claim:4}. Hence we assume that the arrow $b_2\rightarrow b''$ is one-sided and $b_1\in Sym(b'',B_2,A_2)$. If we perform the exchange $b_1\leftrightarrow b''$, then the element $a$ corresponds to Proposition~\ref{prop3:4}(iii). Thus, we have $Sym(a,A_2,B_2)\bigtriangleup Conn(a,b_1,A_2,B_2)\subseteq Sym(a,(A_2)_{b_1,b''},(B_2)_{b'',b_1})$. Since $b\in Conn(a,b_1,A_2,B_2)\setminus Sym(a,A_2,B_2)$ it follows that $b\in Sym(a,(A_2)_{b_1,b''},(B_2)_{b'',b_1})$, so that $b_1\leftrightarrow b''$ is a correcting exchange. It is claimed that $(A_2)_{b_1,b''}$ and $(B_2)_{b'',b_1}$ can be obtained from $A$ and $B$ in two symmetric exchanges. Note that with respect to the exchange $b_1\leftrightarrow b''$ the element $b_2$ corresponds to Case (ii) of Proposition~\ref{prop3:4}. Thus $Sym(b_2,A_2,B_2)\bigtriangleup Conn(b_1,b_2,A_2,B_2)\subseteq Sym(b_2,(A_2)_{b_1,b''},(B_2)_{b'',b_1})$. Since $a_2\in Sym(b_2,A_2,B_2)\setminus Conn(b_1,b_2,A_2,B_2)$ it follows that $a_2\in Sym(b_2,(A_2)_{b_1,b''},(B_2)_{b'',b_1})$. Thus, by Claim~\ref{claim:3}$, (A_2)_{b_1,b''}$ and $(B_2)_{b'',b_1}$ can be obtained from $A$ and $B$ in two symmetric exchanges, and we are done.

(3) Suppose each of $b$ and $b'$ is exchangeable, relative to $A_2$ and $B_2$, with a different $b_i$ ($i=1,2$). We assume that $b$ is exchangeable with $b_1$ and $b'$ is exchangeable with $b_2$ (the case that $b$ is exchangeable with $b_2$ and $b'$ is exchangeable with $b_1$ can be treated using a similar technique and is left to the reader). There are three possibilities for the relation between $b_2$ and $b$ (Figure~\ref{fig8}):
\begin{enumerate}
  \item[(i)] $b_2\in C(b, A_2)$ and $b\not\in C(b_2,B_2)$,
  \item[(ii)] $b_2\not \in C(b, A_2)$ and $b\in C(b_2,B_2)$, and
  \item[(iii)] no relation at all
\end{enumerate}
(the case that both $b_2\in C(b, A_2)$ and $b\in C(b_2,B_2)$ has already been considered in Case (1) above).
In all three cases we will show that the exchange $b_2\leftrightarrow b'$ is a correcting exchange and the result will follow from Claim~\ref{claim:4}.

(i) Suppose $b_2\in C(b, A_2)$ (Figure~\ref{fig8a}). Since $a\in C(b',A_2)$ and $b'\not\in C(a,B_2)$, Proposition~\ref{prop3:4}(iii) implies that $Sym(a,A_2,B_2)\bigtriangleup Conn(a,b_2,A_2,B_2)\subseteq Sym(a,(A_2)_{b_2,b'},(B_2)_{b',b_2})$. Since $b\in Conn(a,b_2,A_2,B_2)\setminus Sym(a,A_2,B_2)$ it follows that $b\in Sym(a,(A_2)_{b_2,b'},(B_2)_{b',b_2})$. Thus, $a\leftrightarrow b$ is a correcting exchange.

\begin{figure}[h!]
  \centering
  \subfigure[Case (3)(i)]{\label{fig8a}\includegraphics[scale=0.35]{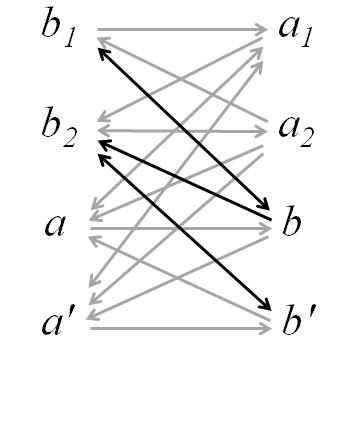}}
  \subfigure[Case (3)(ii)]{\label{fig8b}\includegraphics[scale=0.35]{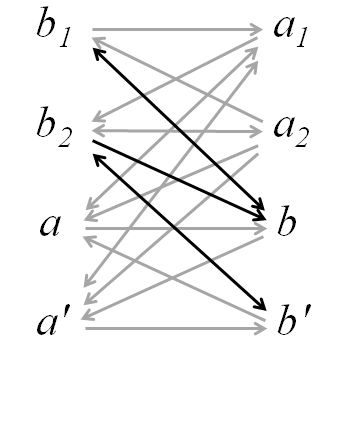}}
  \subfigure[Case (3)(iii)]{\label{fig8c}\includegraphics[scale=0.35]{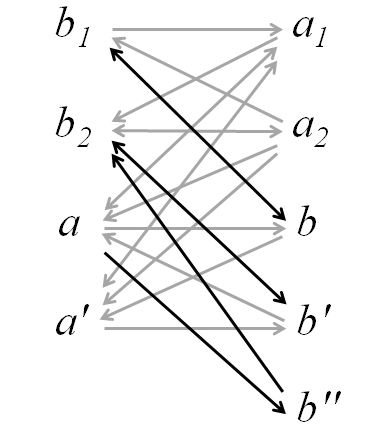}}
  \caption{The three possible scenarios in Case (3), depending on the relation between $b_2$ and $b$.}
  \label{fig8}
\end{figure}

(ii) Suppose $b\in C(b_2,B_2)$ (Figure~\ref{fig8b}). Since $a'\not\in C(b',A_2)$ and $b'\in C(a',B_2)$, Proposition~\ref{prop3:4}(ii) indicates that $Sym(a',A_2,B_2)\bigtriangleup Conn(b_2,a',A_2,B_2)\subseteq Sym(a',(A_2)_{b_2,b'},(B_2)_{b',b_2})$. Since $b\in Conn(b_2,a',A_2,B_2)\setminus Sym(a',A_2,B_2)$ it follows that $b\in Sym(a',(A_2)_{b_2,b'},(B_2)_{b',b_2})$. Thus, $a'\leftrightarrow b$ is a correcting exchange.

(iii) If there is no relation between $b_2$ and $b$ (Figure~\ref{fig8c}) we look at $Conn(a,b_2,A_2,B_2)$ which contains only $a_1$ of the four elements considered so far in $B_2$. Thus there exists $b''\in Conn(a,b_2,A_2,B_2)\cap B$ (Figure~\ref{fig8c}). The rest follows as in Case (i), with $b''$ in place $b$.

It was shown that all three cases (1)-(3) yield a third exchange. This completes the proof of Claim~\ref{claim:5} and the proof of Theorem~\ref{thm1}.
\end{proof} 
\end{proof} 
\begin{rem}
The proof of Theorem~\ref{thm1} starts with a pair of elements, $a_1,a_2\in A$ that are serially and symmetrically exchangeable, relative to $A$ and $B$, with a pair $b_1,b_2\in B$. Since, by \cite{KZ2011}, each pair of elements in $A$ is part of such an exchange, we can expect, as $r(M)$ becomes larger, to find a growing amount of distinct triples $a_1,a_2,a_3\in A$ that are part of a serial symmetric exchange of length 3.
\end{rem}
\section{The case $r(M)=5$}\label{sec4}
\begin{thm}
Any two bases of a matroid of rank 5 have a full serial symmetric exchange of length at most 6.
\end{thm}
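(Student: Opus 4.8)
The plan is to work in the block matroid $M|_{A \cup B}$ (so $|A|=|B|=5$, $E=A\cup B$ disjoint), exactly as in the proof of Proposition~\ref{prop3:4}; there a full serial symmetric exchange is precisely a sequence of single symmetric exchanges carrying $(A,B)$ to $(B,A)$, i.e.\ reaching distance $5$. Each increasing exchange raises the distance by one while each correcting exchange leaves it fixed, so a run reaching distance $5$ has length $5+k$, where $k$ is the number of correcting exchanges used. Hence the theorem is equivalent to the statement that distance $5$ is reachable using at most one correcting exchange. First I would invoke Theorem~\ref{thm1} to obtain, in three increasing exchanges and with no correcting exchange, a pair $(A_3,B_3)$ at distance $3$; it then remains to pass from distance $3$ to distance $5$ in at most three further exchanges, of which at most one is correcting.

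The easy half is the passage from distance $4$ to distance $5$, which I claim is always a single increasing exchange. At distance $4$ the basis $A_4$ meets $A$ in exactly one element $c$ and meets $B$ in the four elements $B\setminus\{d\}$ for a unique $d\in B$; dually $B_4=(A\setminus\{c\})\cup\{d\}$. Then $A_4-c+d=B$ and $B_4-d+c=A$ are both bases, so $c\leftrightarrow d$ is a valid symmetric exchange carrying $(A_4,B_4)$ to $(B,A)$. Thus no correcting exchange is ever needed at the final step.

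The crux is therefore the passage from distance $3$ to distance $4$, where I must show that one correcting exchange suffices. Write the two elements of $A_3\cap A$ as $c_1,c_2$ and the two elements of $B_3\cap B$ as $d_1,d_2$; an increasing exchange here is exactly some $c_i\leftrightarrow d_j$. If one exists we finish in a single step, so assume none does. Then each of the (nonempty) sets $Sym(c_i,A_3,B_3)$ is contained in the three moved-in elements $B_3\cap A$, and symmetrically each $Sym(d_j,B_3,A_3)$ is contained in $A_3\cap B$. I would exploit this rigidity: a correcting exchange $c_1\leftrightarrow a$ with $a\in Sym(c_1,A_3,B_3)\subseteq B_3\cap A$ preserves the distance, since $a\in A$ merely replaces $c_1$ as a member of $A_3\cap A$. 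I would then track the sets $Sym(c_2,\cdot,\cdot)$, $Sym(a,\cdot,\cdot)$ and $Sym(d_j,\cdot,\cdot)$ under this exchange using the four cases of Proposition~\ref{prop3:4}, together with $|Conn(\cdot,\cdot,\cdot,\cdot)|\neq 1$ from Proposition~\ref{prop:KZ}, to force a new increasing exchange into $\{d_1,d_2\}$ after the correcting one. As in the proof of Theorem~\ref{thm1}, the extra element guaranteed by Proposition~\ref{prop:KZ} is what drives the argument.

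Assembling the pieces gives at most $3$ (Theorem~\ref{thm1}) $+\,2$ (one correcting plus one increasing, distance $3\to4$) $+\,1$ (distance $4\to5$) $=6$ exchanges. The main obstacle is the distance-$3$-to-$4$ analysis: unlike the final step there is no closed-form completing exchange, and ruling out the possibility that a correcting exchange fails to unlock an increasing one requires a case analysis on the relations among $c_1,c_2,d_1,d_2$ and the moved-in elements, in the spirit of the diagrams used for Theorem~\ref{thm1}. I expect several subcases, each resolved by a single application of Proposition~\ref{prop3:4}.
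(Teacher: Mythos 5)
Your overall skeleton is the same as the paper's: three exchanges from Theorem~\ref{thm1}, then one correcting exchange plus one increasing exchange to get from distance $3$ to distance $4$, then the free final exchange (your observation that $A_4-c+d=B$ and $B_4-d+c=A$ are automatically bases is exactly how the paper handles both the ``length 5'' case and the last step). The problem is that everything you say about the crucial step, distance $3$ to $4$, is a plan rather than a proof, and the plan as stated has a hole. You propose to perform the correcting exchange $c_1\leftrightarrow a$ for an \emph{arbitrary} $a\in Sym(c_1,A_3,B_3)$ and then ``track'' the Sym sets. But if $a$ happens to have no relation to $c_2$, i.e.\ $a\notin C(c_2,B_3)$ and $c_2\notin C(a,A_3)$, then you are in case (i) of Proposition~\ref{prop3:4}: $Sym(c_2,\cdot,\cdot)$ is literally unchanged, so it still misses $\{d_1,d_2\}$, and Proposition~\ref{prop3:4} says nothing about the Sym set of the just-swapped element $a$ either. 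Nothing is unlocked, and no amount of tracking will manufacture the increasing exchange. So the whole point is that the correcting exchange must be \emph{chosen} so that case (i) cannot occur, and proving that such a choice exists is where all the work lies.

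The paper supplies exactly the two ingredients your sketch is missing. First, the assumption that no $c_i\leftrightarrow d_j$ exchange exists, combined with the fact that each $C(c_i,B_3)$ must meet $\{d_1,d_2\}$ and each $C(d_j,A_3)$ must meet $\{c_1,c_2\}$ (else a circuit would lie inside $A$ or inside $B$), forces the four elements into a directed $4$-cycle, say $c_1\to d_1\to c_2\to d_2\to c_1$, with no other arrows among them. Consequently $d_1\in Conn(c_1,c_2,A_3,B_3)$ and $d_2\in Conn(c_2,c_1,A_3,B_3)$, neither Conn set contains the other $d$, so Proposition~\ref{prop:KZ} forces each of these Conn sets to also contain an element of the three-element set $B_3\cap A$. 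Second, a pigeonhole argument: if no element of $Sym(c_1,A_3,B_3)$ were related to $c_2$, no element of $Sym(c_2,A_3,B_3)$ were related to $c_1$, and the two Conn sets met neither Sym set nor each other, you would have four pairwise disjoint nonempty subsets of the $3$-element set $B_3\cap A$, which is impossible. Hence some moved-in element $a_i$ is exchangeable with one of $c_1,c_2$ and has at least a one-sided relation with the other one, $c_k$. Performing the correcting exchange with \emph{this} $a_i$ places $c_k$ in case (ii), (iii) or (iv) of Proposition~\ref{prop3:4}, and in each case one of $d_1,d_2$ lies in the Conn set on the left-hand side of the inclusion but in none of the other terms (again because there is no increasing exchange at distance $3$), so the symmetric-difference inclusion forces that $d_j$ into $Sym(c_k,\cdot,\cdot)$: the desired increasing exchange. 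This structure-plus-pigeonhole-plus-three-case analysis is the actual content of the paper's proof; without it your argument does not go through.
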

\begin{proof}
Let $M$ be a matroid of rank 5, and let $A=\{a_1,\ldots,a_5\}$ and $B=\{b_1,\ldots,b_5\}$ be two bases in $M$. By Theorem~\ref{thm1} we may assume that $\{a_1,a_2,a_3\}$ can be serially and symmetrically exchanged with $\{b_1,b_2,b_3\}$. Let $A'=(A\setminus \{a_1,a_2,a_3\})\cup \{b_1,b_2,b_3\}$ and $B'=(B\setminus \{b_1,b_2,b_3\})\cup \{a_1,a_2,a_3\}$. If there is still an exchange, relative to $A'$ and $B'$, between one of $a_4$ or $a_5$ and $b_4$ or $b_5$, then a full serial symmetric exchange of length 5 exists. So, we assume that there is no fourth exchange. Since each of $C(a_4,B')$ and $C(a_5,B')$ must contain an element of $B$ and each of $C(b_4,A')$ and $C(b_5,A')$ must contain an element of $A$, and given that there is no fourth exchange, we may assume that the relations among $a_4,a_5,b_4$ and $b_5$, relative to $A'$ and $B'$, are as illustrated in Figure~\ref{fig9}.
\begin{figure}[h!]
\begin{center}
\includegraphics[scale=0.35]{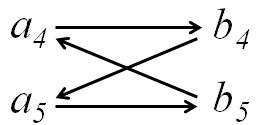}
\end{center}
\caption{The relations among $a_4,a_5,b_4$ and $b_5$, given that there is no fourth exchange.}
\label{fig9}
\end{figure}

Now, $b_5\in Conn(a_5,a_4,A',B')$, so at least one element of $\{a_1,a_2,a_3\}$ must be in $Conn(a_5,a_4,A',B')$, by Proposition~\ref{prop:KZ}. Similarly, at least one element of $\{a_1,a_2,a_3\}$ must be in $Conn(a_4,a_5,A',B')$. In addition, each of $Sym(a_4,A',B')$ and $Sym(a_5,A',B')$ must contain an element of $\{a_1,a_2,a_3\}$, since we assume there is no fourth exchange. All these imply that for some $i\in\{1,2,3\}$, and $j\in\{4,5\}$, $a_j$ is exchangeable, relative to $A'$ and $B'$, with $a_i$, and there is a relation between $a_k$ ($k\in\{4,5\},k\ne j$) and $a_i$ (either $a_k\in C(a_i,A')$ or $a_i\in C(a_k,B')$ or both). We assume $i=1$, and $j=4$, and leave it to the reader to verify the other cases, using similar methods. We perform the exchange $a_4\leftrightarrow a_1$ (this is a "correcting exchange", in the terminology used in the proof of Theorem~\ref{thm1}). If $a_1\in C(a_5,B')$ and $a_5\not\in C(a_1,A')$ (Figure~\ref{fig10a}) then, by Proposition~\ref{prop3:4}(ii), we have $Sym(a_5,A',B')\bigtriangleup Conn(a_4,a_5,A',B')\subseteq Symm(a_5,A'_{a_4,a_1},B'_{a_1,a_4})$. Since $b_4\not\in Sym(a_5,A',B')$ and $b_4\in Conn(a_4,a_5,A',B')$ it follows that $b_4\in Symm(a_5,A'_{a_4,a_1},B'_{a_1,a_4})$ (Figure~\ref{fig10b}). After exchanging $a_5\leftrightarrow b_4$, the sixth and last exchange is $a_1\leftrightarrow b_5$ (Figure~\ref{fig10c}). If $a_5\in C(a_1,A')$ and $a_1\not\in C(a_5,B')$ then, by Proposition~\ref{prop3:4}(iii), we have $Sym(a_5,A',B')\bigtriangleup Conn(a_5,a_4,A',B')\subseteq Symm(a_5,A'_{a_4,a_1},B'_{a_1,a_4})$. Since $b_5\in Conn(a_5,a_4,A',B')\setminus Sym(a_5,A',B')$ it follows that $b_5\in Symm(a_5,A'_{a_4,a_1},B'_{a_1,a_4})$. Exchanging $a_5\leftrightarrow b_5$ and then $a_1\leftrightarrow b_4$ completes the full exchange. If both $a_1\in C(a_5,B')$ and $a_5\in C(a_1,A')$ hold, then by Proposition~\ref{prop3:4}(iv), we have $Sym(a_4,A',B')\bigtriangleup Sym(a_5,A',B')\bigtriangleup Conn(a_4,a_5,A',B')\bigtriangleup Conn(a_5,a_4,A',B')\subseteq Symm(a_5,A'_{a_4,a_1},B'_{a_1,a_4})$. Since $b_5$ is only in one of the terms on the left hand side ($Conn(a_5,a_4,A',B')$) it must be in $Symm(a_5,A'_{a_4,a_1},B'_{a_1,a_4})$. Thus,  exchanging $a_5\leftrightarrow b_5$ and then $a_1\leftrightarrow b_4$ will complete the full serial symmetric exchange.
\begin{figure}[h!]
  \centering
  \subfigure[]{\label{fig10a}\includegraphics[scale=0.35]{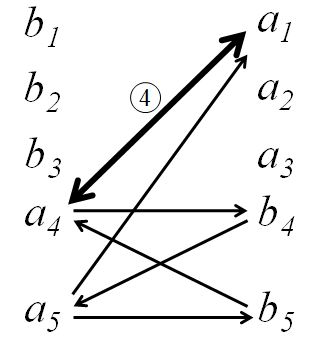}}
  \subfigure[]{\label{fig10b}\includegraphics[scale=0.35]{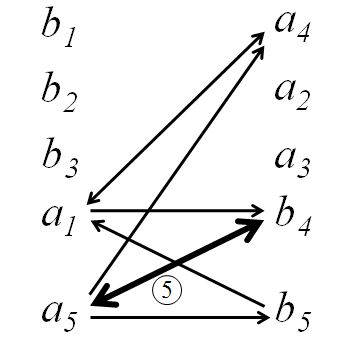}}
  \subfigure[]{\label{fig10c}\includegraphics[scale=0.35]{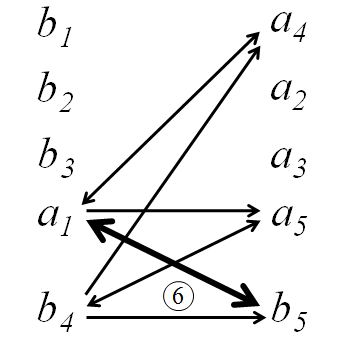}}
  \caption{ The last three symmetric exchanges when $r(M)=5$.}
  \label{fig10}
\end{figure}
\end{proof}
\section{The case of a binary matroid}\label{sec5}

When $M$ is binary the first inclusion in Proposition~\ref{prop2:1} is an equality:
\begin{prop}\label{prop6:1}
Let $B$ be a basis in a binary matroid $M$. If $x,y\not\in B$, $x\ne y$, $b\in C(x,B)\cap C(y,B)$ and $B_{b,x}=B-b+x$, then
\begin{equation}\label{eq2}
C(y,B_{b,x})=C(x,B)\bigtriangleup C(y,B).
\end{equation}
\end{prop}
\begin{proof}
In view of Proposition~\ref{prop2:1} it remains to show that if $b'\in C(x,B)\cap C(y,B)$, then $b'\not\in C(y,B_{b,x})$. The hypotheses imply that $\cl(B-b)$, $\cl(B\setminus\{b,b'\}+ x)$, and $\cl(B-b')$ are distinct hyperplanes that contain $\cl(B\setminus\{b,b'\})$. Since $M$ is binary, they are the only such hyperplanes. Also, $y\not\in\cl(B-b)$ and $y\not\in\cl(B-b')$. Thus, $y$ must be in $\cl(B\setminus\{b,b'\}+ x)$, which implies that $b'\not\in C(y,B_{b,x})$.
\end{proof}
\begin{rem}
Proposition~\ref{prop6:1} implies that for a binary matroid all the first inclusions in parts (ii)-(iv) of Proposition~\ref{prop3:4} are equalities.
\end{rem}
The property of Proposition~\ref{prop6:1} characterizes binary matroids:
\begin{thm}
The following are equivalent for a matroid $M$:
\begin{enumerate}
\item[\Text{(i)}] $M$ is binary.
\item[\Text{(ii)}] For any basis $B$ of $M$, whenever an element $b\in B$ is replaced by $x\not\in B$ to obtain a basis $B_{b,x}=B-b+x$, for any $y\not\in B$, $y\ne x$, such that $b\in C(y,B)$, $C(y,B_{b,x}) =C(x,B)\bigtriangleup C(y,B)$.
\end{enumerate}
\end{thm}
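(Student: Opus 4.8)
The implication (i) $\Rightarrow$ (ii) is exactly Proposition~\ref{prop6:1}, so the plan is to concentrate on the converse, which I would establish not by a direct excluded-minor argument but by constructing an auxiliary binary matroid and showing that condition (ii) forces $M$ to coincide with it. Fix any basis $B_0=\{f_1,\dots,f_r\}$ of $M$ (here $r=r(M)$) and let $N$ be the binary matroid represented over $GF(2)$ by the $r\times|E|$ matrix whose column at $f_i\in B_0$ is the standard vector $e_i$ and whose column at $x\notin B_0$ is the characteristic vector of $C(x,B_0)\cap B_0$ in the coordinates indexed by $B_0$. By construction $N$ is binary, $B_0$ is a basis of $N$, and the fundamental circuits of $N$ with respect to $B_0$ satisfy $C_N(x,B_0)=C(x,B_0)$ for every $x\notin B_0$; that is, $M$ and $N$ have identical fundamental circuits relative to $B_0$ (I write $C_M$, $C_N$ for fundamental circuits computed in $M$ and in $N$). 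Since $N$ is binary it obeys the conclusion of Proposition~\ref{prop6:1} at \emph{every} one of its bases, while by hypothesis $M$ obeys condition (ii) at every one of its bases.

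The heart of the argument is then to show, by induction along the basis-exchange graph starting at $B_0$, that $M$ and $N$ share all bases and all fundamental circuits. Concretely I would prove that for every basis $B'$ of $M$ reachable from $B_0$ by single-element exchanges, (a) $B'$ is also a basis of $N$ and (b) $C_M(z,B')=C_N(z,B')$ for all $z\notin B'$. For the inductive step, pass from a basis $B$ at which (a),(b) hold to $B'=B-b+x$ with $b\in C(x,B)$. Agreement at $B$ gives $b\in C_N(x,B)$, so $B'$ is a basis of $N$, settling (a). For (b) I would split on $z$: when $z=b$ one has $C(z,B')=C(x,B)$ in both matroids because $B'+b=B+x$; when $b\notin C(z,B)$ the circuit is unchanged, $C(z,B')=C(z,B)$, by the observation preceding Proposition~\ref{prop2:1}; and when $b\in C(z,B)$, condition (ii) applied in $M$ and Proposition~\ref{prop6:1} applied in $N$ both give $C(z,B')=C(x,B)\triangle C(z,B)$, whose right-hand sides coincide by the inductive hypothesis. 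Thus (b) holds at $B'$.

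Since the basis-exchange graph of any matroid is connected, (a) and (b) propagate from $B_0$ to every basis of $M$, yielding $\mathcal B_M\subseteq\mathcal B_N$ together with matching fundamental circuits at each $M$-basis. To upgrade this to equality I would run the same connectivity idea inside $N$: if some $B^*\in\mathcal B_N\setminus\mathcal B_M$ existed, take an exchange path in $N$ from $B_0$ to $B^*$ and let $B\to B'=B-b+x$ be its first step leaving $\mathcal B_M$; then $b\in C_N(x,B)=C_M(x,B)$ by (b) at the $M$-basis $B$, forcing $B'\in\mathcal B_M$, a contradiction. Hence $\mathcal B_M=\mathcal B_N$, so $M=N$ is binary. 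The main obstacle I anticipate is precisely this bookkeeping across the exchange graph: arranging the inductive hypothesis so that (b) simultaneously transfers the ``which neighbours are bases'' information and the circuit identities at each step, and then cleanly converting $\mathcal B_M\subseteq\mathcal B_N$ into equality. Everything else — the construction of $N$ and the three-case verification of (b) — is routine and uses only Propositions~\ref{prop2:1} and~\ref{prop6:1}.
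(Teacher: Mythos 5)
Your proposal is correct, but it takes a genuinely different route from the paper. The paper proves (ii)$\Rightarrow$(i) by contraposition through Tutte's excluded-minor theorem: if $M$ is not binary it has a $U_{2,4}$-minor, hence a flat $F$ of rank $r(M)-2$ lying in four hyperplanes $H_1,\dots,H_4$; picking a basis $B_0$ of $F$ and $a_i\in H_i-F$, the basis $B=B_0\cup\{a_1,a_2\}$ and the exchange $B_{a_1,a_3}=B_0\cup\{a_2,a_3\}$ give an explicit violation of (ii), since $a_2$ lies in all of $C(a_4,B)$, $C(a_3,B)$ and $C(a_4,B_{a_1,a_3})$, so $C(a_4,B_{a_1,a_3})\ne C(a_4,B)\bigtriangleup C(a_3,B)$. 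You instead argue directly: you build the $GF(2)$-represented matroid $N$ whose fundamental circuits at one basis $B_0$ match those of $M$, and use (ii) in $M$ together with Proposition~\ref{prop6:1} in $N$ to propagate the agreement of bases and fundamental circuits across the basis-exchange graph, then run the connectivity argument inside $N$ to turn $\mathcal{B}_M\subseteq\mathcal{B}_N$ into equality. Your three-case inductive step ($z=b$; $b\notin C(z,B)$; $b\in C(z,B)$) is sound, as is the ``first step leaving $\mathcal{B}_M$'' argument, since $b\in C_N(x,B)=C_M(x,B)$ indeed forces $B-b+x$ to be a basis of $M$. The trade-off: the paper's proof is a few lines but leans on the deep fact that non-binary matroids have $U_{2,4}$-minors, whereas yours is longer but elementary, avoids excluded minors entirely, and yields a stronger conclusion --- under (ii), $M$ is \emph{equal} to the binary matroid coordinatized by its fundamental-circuit incidences at any single basis, i.e., your argument actually produces a representation.
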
\label{thm5:3}
\begin{proof}
It remains to show that (ii) implies (i). Assume $M$ is not binary, then $M$ has a $U_{2,4}$-minor, so some flat $F$ of rank
$r(M)-2$ is contained in four distinct hyperplanes $H_1,H_2,H_3,H_4$ (perhaps more) of $M$. Let $B_0$ be a basis of $F$. For $i\in \{1,2,3,4\}$, pick $a_i\in H_i-F$. Note that $B_0\cup \{a_i,a_j\}$ is a basis of $M$ whenever $\{i,j\}\subset \{1,2,3,4\}$. Set $B=B_0\cup\{a_1,a_2\}$. Let $B_{a_1,a_3} = B_0\cup\{a_2,a_3\}$ (see Notation~\ref{notn1}). Note that $a_2$ is in each of $C(a_4,B)$, $C(a_4,B_{a_1,a_3})$, and $C(a_3,B)$. Thus $C(a_4,B_{a_1,a_3})\ne C(a_4,B)\triangle C(a_3,B)$, although $a_1\in C(a_4,B)$.
\end{proof}

\begin{rem}
Rota conjectured and Greene \cite{Greene71} proved that a matroid $M$ is binary if and only if $|Sym(a,A,B)|$ is odd for any two disjoint bases $A$ and $B$ and any $a\in A$. From Proposition~\ref{prop3:1}(ii) it follows that if $M$ is binary, then $|Conn(a,a',A,B)|$ is even for any two disjoint bases $A$ and $B$ and any $a,a'\in A,\;a\ne a'$. However, this is not a characterization of binary matroids, since $U_{2,4}$ is a counter-example.
\end{rem}
\bibliographystyle{abbrv}
\bibliography{base_ex}
\end{document}